\theoremstyle{definition}
\newtheorem{thm}{Theorem}[section]
\newtheorem{lem}[thm]{Lemma}
\newtheorem*{lem*}{Lemma}
\newtheorem*{thm*}{Theorem}
\newtheorem{prop}[thm]{Proposition}
\newtheorem{cor}[thm]{Corollary}
\newtheorem{defn}[thm]{Definition}
\newtheorem*{remark*}{Remark}
\newtheorem{remark}{Remark}
\newtheorem*{example}{Example}
\newtheorem{cor/defn}[thm]{Corollary/Definition}
\newcommand{\y}{\mathscr{Y}}
\DeclareMathOperator{\sH}{\mathscr{H} }
\DeclareMathOperator{\sP}{\mathscr{P}}
\DeclareMathOperator{\GL}{\mathrm{GL}}
\DeclareMathOperator{\Hilb}{\mathrm{Hilb}}
\DeclareMathOperator{\sort}{sort}
\DeclareMathOperator{\Par}{\mathrm{Par}}
\DeclareMathOperator{\Comp}{\mathrm{Comp}}
\DeclareMathOperator{\Compred}{\mathrm{Comp}^{\textit{red}}}
\title{Delta Operators on Almost Symmetric Functions}
\author{Milo Bechtloff Weising}
\date{\today}
\begin{document}

\maketitle

\abstract{We construct $\Delta$-operators $F[\Delta]$ on the space of almost symmetric functions $\sP_{as}^{+}$. These operators extend the usual $\Delta$-operators on the space of symmetric functions $\Lambda \subset \sP_{as}^{+}$ central to Macdonald theory. The $F[\Delta]$ operators are constructed as certain limits of symmetric functions in the Cherednik operators $Y_i$ and act diagonally on the stable-limit non-symmetric Macdonald functions $\widetilde{E}_{(\mu|\lambda)}(x_1,x_2,\ldots;q,t).$ Using properties of Ion-Wu limits, we are able to compute commutation relations for the $\Delta$-operators $F[\Delta]$ and many of the other operators on $\sP_{as}^{+}$ introduced by Ion-Wu. Using these relations we show that there is an action of $\mathbb{B}_{q,t}^{\text{ext}}$ on almost symmetric functions which we show is isomorphic to the polynomial representation of $\mathbb{B}_{q,t}^{\text{ext}}$ constructed by Gonz\'{a}lez-Gorsky-Simental.}

\tableofcontents

\section{Introduction}

The symmetric Macdonald functions $P_{\lambda}[X;q,t]$, introduced by Macdonald \cite{MacDSLC}, are central objects in modern algebraic combinatorics. One of the many characterizations of the symmetric functions $P_{\lambda}[X;q,t]$ comes from considering the action of the Macdonald operator $\Delta$ on the ring of symmetric functions $\Lambda.$ The $P_{\lambda}[X;q,t]$ are a $\Delta$-eigenbasis for the ring of symmetric functions $\Lambda$ with distinct spectrum. 

Through the work of Haiman \cite{HMacDG} it is known that the Macdonald operator and a variant $\widetilde{H}_{\lambda}[X;q,t]$ of the symmetric Macdonald functions $P_{\lambda}$ carry geometric information about the Hilbert schemes $\Hilb^{n}(\mathbb{C}^2)$ of points in $\mathbb{C}^2.$ This work was extended by Schiffmann-Vasserot in \cite{SV} where the authors constructed a representation of the positive elliptic Hall algebra $\mathscr{E}^{+}$ of Burban-Schiffmann \cite{BS} on the ring of symmetric functions $\Lambda$ generated by multiplication operators $p_r[X]^{\bullet}$ and the $\Delta$-operators $p_{\ell}[\Delta]$. These $\Delta$-operators act diagonally on the symmetric Macdonald function basis and satisfy many interesting commutation relations encoded by the positive elliptic Hall algebra $\mathscr{E}^{+}$. One may construct these operators directly as stable limits of certain elements of the spherical double affine Hecke algebras in type $\GL.$ In particularly, the operator $p_1[\Delta] = \Delta$ is essentially the stable limit of operators of the form $Y_{1}^{(n)}+\ldots +Y_{n}^{(n)}$ acting on symmetric polynomials where $Y_{i}^{(n)}$ denote the Cherednik operators. 

Extending the work of Schiffmann-Vasserot, Carlsson-Gorksy-Mellit constructed an algebra $\mathbb{B}_{q,t}$ which acts on the equivariant K theory of the parabolic flag Hilbert schemes of points in $\mathbb{C}^2$. This algebra is a non-unital quiver path algebra containing many copies of affine Hecke algebras in type $\GL.$ A highly related algebra $\mathbb{A}_{q,t}$ introduced by Carlsson-Mellit, called the double Dyck path algebra, was a crucial ingredient in their proof of the Shuffle Theorem \cite{CM_2015}. By looking at loops at the vertex labelled $0$ of $\mathbb{B}_{q,t}$ in the polynomial representation $V_{\bullet}$ one may recover many of the operators coming from the action of $\mathscr{E}^{+}$ on $V_{0} = \Lambda.$ However, one does not find the $\Delta$-operators in this way. More recently, the authors Gonz\'{a}lez-Gorsky-Simental \cite{gonzález2023calibrated} studied a larger algebra $\mathbb{B}_{q,t}^{\text{ext}}$ which roughly adds in these missing $\Delta$-elements to $\mathbb{B}_{q,t}$. They construct many representations for this algebra but in particular they show that there is a unique action of $\mathbb{B}_{q,t}^{\text{ext}}$ on $V_{\bullet}$ which extends the usual action of $\mathbb{B}_{q,t}$ and contains the $\Delta$-operators and their analogues on the equivariant K theory of parabolic flag Hilbert schemes with flags of non-trivial length.

Through the work of Ion-Wu \cite{Ion_2022} we are able to understand $\mathbb{A}_{q,t}$ and $\mathbb{B}_{q,t}$ through the double affine Hecke algebras in type $\GL.$ This relationship comes through the existence of the $+$-stable limit double affine Hecke algebra $\mathscr{H}$ and its action on the ring of almost symmetric functions $\sP_{as}^{+}.$ This action in a sense globalizes the polynomial representations of $\mathbb{A}_{q,t}$ and $\mathbb{B}_{q,t}$ and is constructed from the polynomial representations of the double affine Hecke algebras in type $\GL.$ Importantly, Ion-Wu construct limit Cherednik operators $\y_i$ which correspond to the action of the $z_i$ elements in $\mathbb{A}_{q,t}$ and $\mathbb{B}_{q,t}$. The process used by Ion-Wu for constructing this algebra $\mathscr{H}^{+}$ and its representation on $\sP_{as}^{+}$ is a not the classical stable limit procedure but rather a $t$-adically enriched version of stability. The additional flexibility in the notion of convergence for sequences of polynomials is enough to make sense of a version of Cherednik theory for the  $+$-stable limit double affine Hecke algebra $\mathscr{H}^{+}$ including the existence of a special basis $\widetilde{E}_{(\mu|\lambda)}(x_1,x_2,\ldots;q,t)$ for $\sP_{as}^{+}$ of weight vectors for the limit Cherednik operators $\y$ \cite{MBWArxiv}. However, the spectrum of the limit Cherednik operators $\y$ is far from simple which each nonzero weight space being infinite dimensional. To remedy this issue the author constructed a new operator $\Psi_{p_1}$ on $\sP_{as}^{+}$ extending the usual Macdonald operator on $\Lambda \subset \sP_{as}^{+}$ which commutes with the limit Cherednik operators $\y$ and refines the nonzero $\y$-weight spaces to be $1$ dimensional. This operator is constructed as an Ion-Wu operator limit of the operators $t^nY_{1}^{(n)}+\ldots + t^nY_{n}^{(n)}$ acting on finite variable polynomials. The author conjectured that a similar operator may be constructed for any symmetric function $F$ i.e. that the sequence $F[t^nY_{1}^{(n)}+\ldots + t^nY_{n}^{(n)}]$ converges in the sense of Ion-Wu to an operator on $\sP_{as}^{+}.$

In this paper we confirm the author's conjecture from \cite{MBWArxiv} and construct a family of operators $F[\Delta]$ on the ring of almost symmetric functions $\sP_{as}^{+}$ built as Ion-Wu limits of symmetric polynomials in the Cherednik operators (Theorem \ref{convergence theorem}). These operators act diagonally in the $\widetilde{E}_{(\mu|\lambda)}$ basis and extend the action of the $\Delta$-operators on $\Lambda \subset \sP_{as}^{+}.$ The proof that these sequences of symmetric polynomials in the Cherednik operators converge as operators is highly technical and involves understanding the action of the operators $F[t^nY_{1}^{(n)}+\ldots + t^nY_{n}^{(n)}]$ on partially symmetric polynomials in finitely many variables. Note that this is more involved than understanding the action of symmetric polynomials in the Cherednik operators on symmetric polynomials which is well understood. Importantly, the $F[\Delta]$ cannot be expressed entirely in terms of the limit Cherednik operators $\y.$ We will obtain an explicit expansion (Proposition \ref{convergence for elementary sym}) of the operators $e_r[\Delta]$ acting on each of the subspaces $x_1\cdots x_k \sP_{as}^{+}$, which depends on $k$, in terms of the operators $\y_i,$ $\epsilon_{k},$ and $\pi.$ Using properties of Ion-Wu operator limits we compute some commutation relations (Propositions \ref{commutation relation for Delta}) between the $F[\Delta]$ and the operators $\y_i,T_i,\widetilde{\pi}, $ and $\epsilon_{k}.$ At the end of the paper we will use these commutation relations to show that there exists an action of $\mathbb{B}_{q,t}^{\text{ext}}$ on $\bigoplus_{k \geq 0}x_1\cdots x_k \sP(k)^{+}$ which we show is isomorphic to the polynomial representation $V_{\bullet}$ of $\mathbb{B}_{q,t}^{\text{ext}}$ constructed by Gonz\'{a}lez-Gorsky-Simental (Corollary \ref{ext B qt cor}). This result shows that we may consider the $\Delta$-operators on $V_{\bullet}$ in \cite{gonzález2023calibrated} as limits of symmetric polynomials in the Cherednik operators. 

\subsection{Acknowledgements}

The author would like to thank their advisor Monica Vazirani for her continued support. The author would also like to thank Nicolle Gonz\'{a}lez, Jos\'{e} Simental, and Eugene Gorsky for helpful conversations about $\mathbb{B}_{q,t}^{\text{ext}}$.

\section{Definitions and Conventions}

We present here the conventions that will be used in this paper for the double affine Hecke algebra of type $GL$. Take note of the quadratic relation $(T_{i}-1)(T_{i}+t) = 0$ which has been chosen to match with the conventions in \cite{Ion_2022} and the author's prior work \cite{MBWArxiv}. Some of the other notation has been slightly altered to align with other works of the author.

\begin{defn} \label{defn1}
Define the \textbf{\textit{double affine Hecke algebra}} $\mathscr{H}_n$ to be the $\mathbb{Q}(q,t)$-algebra generated by $T_1,\ldots,T_{n-1}$, $X_1^{\pm 1},\ldots,X_{n}^{\pm 1}$, and $Y_1^{\pm 1},\ldots,Y_n^{\pm 1}$ with the following relations:

\begin{multicols}{2}
\begin{enumerate}[(i)]
    \item 
    \label{def-i} 
    $(T_i -1)(T_i +t) = 0$,
    \item [] $T_iT_{i+1}T_i = T_{i+1}T_iT_{i+1}$,
    \item [] $T_iT_j = T_jT_i$, $|i-j|>1$,
    \item 
    \label{def-ii} $T_i^{-1}X_iT_i^{-1} = t^{-1}X_{i+1}$,
    \item []$T_iX_j = X_jT_i$, $j \notin \{i,i+1\}$,
    \item []$X_iX_j = X_jX_i$,
    \item 
    \label{def-iii}$T_iY_iT_i = tY_{i+1}$,
    \item []$T_iY_j = Y_jT_i$, $j\notin \{i,i+1\}$,
    \item []$Y_iY_j = Y_jY_i$,
    \item 
    \label{def-iv}$Y_1T_1X_1 = X_2Y_1T_1$,
    \item 
    \label{def-v}$Y_1X_1\cdots X_n = qX_1\cdots X_nY_1$
    \item []
\end{enumerate}
\end{multicols}

Further, define the special elements $\pi_n, \widetilde{\pi}_n$ by 
\begin{itemize}
    \item $\pi_n := Y_1T_1\cdots T_{n-1}$
    \item $\widetilde{\pi}_n: = X_1T_1^{-1}\cdots T_{n-1}^{-1}.$
\end{itemize}

This conveniently allows us to write $$Y_1 = \pi_nT_{n-1}^{-1}\cdots T_1^{-1}.$$ When required we will write $Y_i^{(n)}$ for the element $Y_i$ in $\mathscr{H}_n$ to differentiate between the element $Y_i^{(m)}$ in a different $\mathscr{H}_m$ for $n \neq m$.
\end{defn}

The following commutation relations may be verified directly:
\begin{itemize}
    \item $Y_1\widetilde{\pi}_n = \widetilde{\pi}_n qY_n$
    \item $Y_{i+1} \widetilde{\pi}_n = \widetilde{\pi}_n Y_{i}$ for $1 \leq i \leq n-1.$
\end{itemize}

\begin{defn}\label{idempotents}
For all $0 \leq k < n$ define the element $\epsilon^{(n)}_k \in \sH_n$ as 
\begin{equation*}\label{idempotent eq version 1}
\epsilon^{(n)}_k : = \frac{1}{[n-k]_{t}!} \sum_{\sigma \in \mathfrak{S}_{(1^k,n-k)}} t^{{n-k \choose 2} - \ell(\sigma)} T_{\sigma}.
\end{equation*}
Here $\mathfrak{S}_{(1^k,n-k)}$ is the Young subgroup of $\mathfrak{S}_{n}$ corresponding to the composition $(1^k,n-k)$, $T_{\sigma} = T_{s_{i_1}}\cdots T_{s_{i_r}}$ whenever $\sigma = s_{i_1}\cdots s_{i_r}$ is a reduced word representing $\sigma$, and $[m]_{t}! : = \prod_{i=1}^{m}(\frac{1-t^i}{1-t})$ is the $t$-factorial. We will simply write $\epsilon^{(n)}$ for $\epsilon^{(n)}_0$.
\end{defn}

\begin{defn} \label{defn2}
Let $\mathscr{P}_n = \mathbb{Q}(q,t)[x_1^{\pm 1},\ldots,x_n^{\pm 1}]$. The \textbf{\textit{ standard representation}} of $\mathscr{H}_n$ is given by the following action on $\mathscr{P}_n$:

\begin{center}
\begin{itemize}
    \item $T_if(x_1,\ldots,x_n) = s_i f(x_1,\ldots,x_n) +(1-t)x_i \frac{1-s_i}{x_i-x_{i+1}}f(x_1,\ldots,x_n)$
    \item $X_if(x_1,..,x_n)= x_if(x_1,\ldots,x_n)$
    \item $\pi_nf(x_1,\ldots,x_n) = f(x_2,\ldots,x_{n}, qx_1)$
\end{itemize}
\end{center}

Here $s_i$ denotes the operator that swaps the variables $x_i$ and $x_{i+1}$. Under this action the $T_i$ operators are known as the \textbf{\textit{Demazure-Lusztig operators}}. For $q,t$ generic $\mathscr{P}_n$ is known to be a faithful representation of $\mathscr{H}_n$. The action of the elements $Y_1,\ldots,Y_n \in \mathscr{H}_n$ are called \textbf{\textit{Cherednik operators}}.
\end{defn}

Set $\mathscr{H}_n^{+}$ to be the positive part of $\mathscr{H}_n$ i.e. the subalgebra generated by $T_1,\ldots,T_{n-1}$, $X_1,\ldots,X_n$, and $Y_1,\ldots,Y_n$ without allowing for inverses in the $X_i$ and $Y_i$ elements and set $\mathscr{P}_n^{+} = \mathbb{Q}(q,t)[x_1,\ldots,x_n]$. Importantly, 
$\mathscr{P}_n^{+}$ is a $\mathscr{H}_n^{+}$-submodule of $\mathscr{P}_n$.

\subsection{Non-symmetric Macdonald Polynomials}
Before discussing non-symmetric Macdonald polynomials we must first review some basic combinatorial definitions.

\begin{defn}
 In this paper, a \textbf{\textit{composition}} will refer to a finite tuple $\mu = (\mu_1,\ldots,\mu_n)$ of non-negative integers. We allow for the empty composition $\emptyset$ with no parts. We will let $\Comp$ denote the set of all compositions. The length of a composition $\mu = (\mu_1,\ldots,\mu_n)$ is $\ell(\mu) = n$ and the size of the composition is $| \mu | = \mu_1+\ldots+\mu_n$. As a convention we will set $\ell(\emptyset) = 0$ and $|\emptyset| = 0.$ We say that a composition $\mu$ is \textbf{\textit{reduced}} if $\mu = \emptyset$ or $\mu_{\ell(\mu)} \neq 0.$ We will let $\Compred$ denote the set of all reduced compositions. Given two compositions $\mu = (\mu_1,\ldots,\mu_n)$ and $\beta = (\beta_1,\ldots,\beta_m)$, define $\mu * \beta = (\mu_1,\ldots,\mu_n,\beta_1,\ldots,\beta_m)$. A \textbf{\textit{partition}} is a composition $\lambda = (\lambda_1,\ldots,\lambda_n)$ with $\lambda_1\geq \ldots \geq \lambda_n \geq 1$. Note that vacuously we allow for the empty partition $\emptyset.$ We denote the set of all partitions by $\Par$. We denote $\sort(\mu)$ to be the partition obtained by ordering the nonzero elements of $\mu$ in weakly decreasing order. The dominance ordering for partitions is defined by $\lambda \trianglelefteq \nu$ if for all $i\geq 1$, $\lambda_1 + \ldots +\lambda_i \leq \nu_1 + \ldots +\nu_i$ where we set $\lambda_i = 0$ whenever $i > \ell(\lambda)$ and similarly for $\nu$. If $\lambda \trianglelefteq \nu$ and $\lambda \neq \nu.$ we will write $\lambda \triangleleft \nu$. We will let $\Phi$ denote the set $\Compred \times \Par $ of all pairs $(\mu|\lambda)$ with $\mu \in \Compred$ and $\lambda \in \Par.$
 
 We will in a few instances use the notation $\mathbbm{1}(p)$ to denote the value $1$ if the statement p is true and $0$ otherwise. If $\mu$ is any composition with size $n$ we let $\mathfrak{S}_n/\mathfrak{S}_{\mu}$ denote the corresponding set of all minimal length left coset representatives. 
 \end{defn}

In line with the conventions in \cite{haglund2007combinatorial} we define the Bruhat order on the type $GL_n$ weight lattice $\mathbb{Z}^{n}$ as follows. 

\begin{defn}
Let $e_1,...,e_n$ be the standard basis of $\mathbb{Z}^n$ and let $\alpha \in \mathbb{Z}^n$. We define the\textbf{\textit{ Bruhat ordering}} on $\mathbb{Z}^n$, written simply by $<$, by first defining cover relations for the ordering and then taking their transitive closure. If $i<j$ such that $\alpha_i < \alpha_j$ then we say $\alpha > (ij)(\alpha)$ and additionally if $\alpha_j - \alpha_i > 1$ then $(ij)(\alpha) > \alpha + e_i - e_j$ where $(ij)$ denotes the transposition swapping $i$ and $j.$
\end{defn}

\begin{defn} \label{defn3}
The \textbf{\textit{non-symmetric Macdonald polynomials}} (for $GL_n$) are a family of Laurent polynomials $E_{\mu} \in \mathscr{P}_n$ for $\mu \in \mathbb{Z}^n$ uniquely determined by the following:

\begin{itemize}
    \item Triangularity: Each $E_{\mu}$ has a monomial expansion of the form $E_{\mu} = x^{\mu} + \sum_{\lambda < \mu} a_{\lambda}x^{\lambda}$

    \item Weight Vector: Each  $E_{\mu}$ is a weight vector for the operators $Y_1^{(n)},\ldots,Y_n^{(n)} \in \mathscr{H}_n$.
\end{itemize}
\end{defn}

The non-symmetric Macdonald polynomials are a $Y^{(n)}$-weight basis for the $\mathscr{H}_n$ standard representation $\mathscr{P}_n$. For $\mu \in \mathbb{Z}^n$, $E_{\mu}$ is homogeneous with degree $\mu_1+\ldots +\mu_n$. Further, the set of $E_{\mu}$ corresponding to $\mu \in \mathbb{Z}_{\geq 0}^n$ gives a basis for $\mathscr{P}_n ^{+}$.

\subsection{Stable-Limit DAHA of Ion-Wu}
As the index $n$ varies, the standard $\mathscr{H}_n$ representations, $\sP_n$, fail to form a direct/inverse system of compatible $\mathscr{H}_n$ representations. However, as the authors Ion and Wu investigate in \cite{Ion_2022}, this sequence of representations is compatible enough to allow for the construction of a limiting representation for a new algebra resembling a direct limit of the double affine Hecke algebras of type $GL$. We will start by giving the definition of this algebra.

\begin{defn}\cite{Ion_2022} \label{defn4}
The $+$-\textit{\textbf{stable-limit double affine Hecke algebra}} of Ion and Wu, $\mathscr{H}^{+}$, is the algebra  generated over $\mathbb{Q}(q,t)$ by the elements $T_i,X_i,Y_i$ for $i \geq 1$ satisfying the following relations:

\begin{center}
\begin{itemize}
    \item The generators $T_i,X_i$ for $i \geq 1$ satisfy 
    \eqref{def-i}  and \eqref{def-ii}
    of Defn. \ref{defn1}.
    \item The generators $T_i,Y_i$ for $i \geq 1$ satisfy 
    \eqref{def-i} and \eqref{def-iii}
    of Defn. \ref{defn1}.
    \item  $Y_1T_1X_1 = X_2Y_1T_1.$
 \end{itemize}
 \end{center}
 \end{defn}

 Note that $X_i,Y_i$ are not invertible in $\sH^{+}.$

\begin{defn}
    Define the \textit{\textbf{ring of symmetric functions}} $\Lambda$ as the inverse limit of the graded rings $\mathbb{Q}(q,t)[x_1,\ldots,x_n]^{\mathfrak{S}_n}$ with respect to the homogeneous quotient maps $f(x_1,\ldots,x_{n+1}) \rightarrow f(x_1,\ldots, x_n,0)$ in the category of graded rings. We will write $X := x_1+x_2+\ldots$ and use plethsytic notation $F(x_1,x_2,\ldots) = F[X].$ Define for $ \ell \geq 1$ the \textit{\textbf{power sum symmetric function}}  $p_{\ell}[X]$ by 
    $$p_{\ell}[X]:= \sum_{i \geq 1} x_i^{\ell}.$$ For $r \geq 1$ define the \textit{\textbf{elementary symmetric function}} $e_r[X]$ as 
    $$e_r[X]:= \sum_{i_1 < \ldots < i_r}x_{i_1}\cdots x_{i_r}.$$ As a convention we set $e_0[X]:= 1.$ If $\lambda = (\lambda_1,\ldots, \lambda_r)$ is a partition define $e_{\lambda}[X]:= e_{\lambda_1}[X]\cdots e_{\lambda_r}[X].$ For a partition $\lambda$ define the \textbf{\textit{monomial symmetric function}} $m_\lambda[X]$ by 
    $$m_\lambda[X] := \sum_{\mu} x^{\mu}$$
    where we range over all distinct monomials $x^{\mu}$ such that $\sigma(\mu) = \lambda$ for some permutation $\sigma$.
\end{defn}

\begin{remark}
    It is a classical result that the sets $\{e_{\lambda}[X]\}_{\lambda \in \Par}$ and $\{m_{\lambda}[X] \}_{\lambda \in \Par}$ are $\mathbb{Q}(q,t)$-bases for $\Lambda.$ In fact, 
    $$\Lambda \cong \mathbb{Q}(q,t)[e_1,e_2,\ldots].$$
\end{remark}

\begin{defn}\cite{Ion_2022} \label{defn5}
 Let $\mathscr{P}_{\infty}^{+}$ denote the inverse limit of the rings $\mathscr{P}_{k}^{+}$ with respect to the homomorphisms $\Xi_k: \sP_{k+1}^{+} \rightarrow \sP_k^{+}$ which send $x_{k+1}$ to 0 at each step. We can naturally extend $\Xi_k$ to a map $\mathscr{P}_{\infty}^{+} \rightarrow \mathscr{P}_k$ which will be given the same name. Let $\mathscr{P}(k)^{+} := \mathbb{Q}(q,t)[x_1,\ldots,x_k]\otimes \Lambda[x_{k+1}+x_{k+2}+\ldots] \subset \mathscr{P}_{\infty}^{+}$. Define the \textbf{\textit{ring of almost symmetric functions}} by $\mathscr{P}_{as}^{+} := \bigcup_{k\geq 0} \mathscr{P}(k)^{+}$. Note $\mathscr{P}_{as}^{+} \subset \mathscr{P}_{\infty}^{+}.$ Define $\rho: \mathscr{P}_{as}^{+} \rightarrow x_1\mathscr{P}_{as}^{+}$ to be the linear map defined by $\rho(x_1^{a_1}\cdots x_n^{a_n}F[x_{m}+x_{m+1}+\ldots]) = \mathbbm{1}(a_1 > 0) x_1^{a_1}\cdots x_n^{a_n}F[x_{m}+x_{m+1}+\ldots] $ for $F \in \Lambda$. Note that $\rho$ restricts to maps $\sP_n \rightarrow x_1\sP_n$ which are compatible with the quotient maps $\Xi_n$.
 \end{defn}

The ring $\sP_{as}^{+}$ is a free graded $\Lambda$-module with homogeneous basis given simply by the set of monomials $x^{\mu}$ with $\mu$ reduced. Therefore, $\sP_{as}^{+}$ has a homogeneous $\mathbb{Q}(q,t)$ basis given by all $x^{\mu}m_{\lambda}[X]$ ranging over all reduced compositions $\mu$ and partitions $\lambda$. Further, the dimension of the homogeneous degree d part of $\mathscr{P}(k)^{+}$ is equal to the number of pairs $(\mu|\lambda)$ of reduced compositions $\mu$ and partitions $\lambda$ with $|\mu|+|\lambda| = d$ and $\ell(\mu) \leq k$. 
 
In order to define the operators required for Ion and Wu's main construction we must first review the new definition of convergence introduced in \cite{Ion_2022}. 

 \begin{defn} \cite{Ion_2022}\label{defn6}
 Let $(f_m)_{m \geq 1}$ be a sequence of polynomials with $f_m \in \mathscr{P}_m^{+}$. Then the sequence $(f_m)_{m \geq 1}$ is \textbf{\textit{convergent}} if there exist some N and auxiliary sequences $(h_m)_{m\geq1}$, $(g^{(i)}_m)_{m\geq 1}$, and $(a^{(i)}_m)_{m\geq 1}$ for $1\leq i \leq N$ with $h_m, g^{(i)}_m \in \mathscr{P}_{m}^{+}$, $a^{(i)}_m \in \mathbb{Q}(q,t)$ with the following properties:

\begin{itemize}
    \item For all $m$, $f_m = h_m + \sum_{i=1}^{N} a^{(i)}_m g^{(i)}_m$.
    \item The sequences $(h_m)_{m\geq1}$, $(g^{(i)}_m)_{m\geq1}$ for $1\leq i \leq N$ converge in $\mathscr{P}_{\infty}^{+}$ with limits $h,g^{(i)}$ respectively. That is to say, $\Xi_m(h_{m+1}) = h_m$ and $\Xi_m(g_{m+1}^{(i)}) = g_{m}^{(i)}$ for all $1\leq i \leq N$ and $m \geq 1$. Further, we require $g^{(i)} \in \mathscr{P}_{as}^{+}$.
    \item The sequences $a^{(i)}_m$ for $1\leq i \leq N$ converge with respect to the t-adic topology on $\mathbb{Q}(q,t)$ with limits $a^{(i)}$ which are required to be in $\mathbb{Q}(q,t)$.
\end{itemize}

The sequence is said to have a limit given by
$\lim_{m} f_m = h + \sum_{i=1}^{N}a^{(i)}g^{(i)}.$

\end{defn}

\begin{remark}\label{useful defn of convergence}
    In this paper we will be entirely concerned with convergent sequences $(f_m)_{m \geq 1}$ with almost symmetric limits $\lim_{m} f_m \in \sP_{as}^{+}$. In this case it follows readily from definition that each of these convergent sequences necessarily will have the form 
    $$f_m(x_1,\ldots,x_m) = \sum_{i=1}^{N} c_i^{(m)}x^{\mu^{(i)}}F_i[x_1+\ldots + x_m]$$
    where $N \geq 1$ is fixed, $c_i^{(m)}$ are convergent sequences of scalars with $\lim_{m} c_i^{(m)} \in \mathbb{Q}(q,t)$, $F_i$ are symmetric functions, and $\mu^{(i)}$ are compositions. Here we will consider $x^{\mu^{(i)}} = 0$ in $\sP_{m}$ whenever $\ell(\mu^{(i)}) > m$.
\end{remark}

\begin{defn} \cite{Ion_2022}
    For $m \geq 1$ suppose $A_m$ is an operator on $\sP_m^{+}$. The sequence $(A_m)_{m \geq 1}$ of operators is said to \textbf{\textit{converge}} if for every $f \in \sP_{as}^{+}$ the sequence $(A_m(\Xi_m(f)))_{m \geq 1}$ converges to an element of $\sP_{as}^{+}$. From \cite{Ion_2022} the corresponding operator on $\sP_{as}^{+}$ given by $A(f): = \lim_{m} A_m(\Xi_m(f))$ is well defined and said to be the limit of the sequence $(A_m)_{m \geq 1}$. In this case we will simply write $A = \lim_{m} A_m$.
\end{defn}

The following important technical proposition of Ion and Wu will be used repeatedly in this paper. 

\begin{prop}\label{limits proposition}[Prop. 6.21 \cite{Ion_2022}] 
    If $A = \lim_{m} A_m$ and $f = \lim_{m} f_m$ are limit operators and limit functions respectively then $A(f) = \lim_{m} A_m(f_m).$
\end{prop}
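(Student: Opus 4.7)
The plan is to use the definition $A(f) = \lim_m A_m(\Xi_m(f))$ and reduce the claim to showing that $\lim_m A_m(f_m - \Xi_m(f)) = 0$, which by $\mathbb{Q}(q,t)$-linearity of the $A_m$ and additivity of Ion--Wu limits will give $\lim_m A_m(f_m) = \lim_m A_m(\Xi_m(f)) = A(f)$.

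First I would unpack the hypothesis $f = \lim_m f_m$ via Definition \ref{defn6}: after fixing a common $N$, there exist $h_m, g_m^{(i)} \in \sP_m^+$ and $a_m^{(i)} \in \mathbb{Q}(q,t)$ for $1 \le i \le N$ with $f_m = h_m + \sum_i a_m^{(i)} g_m^{(i)}$, where $h = \lim_m h_m$ and $g^{(i)} = \lim_m g_m^{(i)}$ in $\sP_\infty^+$ (so by the inverse-limit compatibility $h_m = \Xi_m(h)$ and $g_m^{(i)} = \Xi_m(g^{(i)})$), each $g^{(i)} \in \sP_{as}^+$, and $a_m^{(i)} \to a^{(i)}$ $t$-adically with $f = h + \sum_i a^{(i)} g^{(i)}$. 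Since $\Xi_m$ is $\mathbb{Q}(q,t)$-linear, $\Xi_m(f) = h_m + \sum_i a^{(i)} g_m^{(i)}$, so
$$f_m - \Xi_m(f) = \sum_{i=1}^N (a_m^{(i)} - a^{(i)})\, g_m^{(i)}.$$
Applying $A_m$ term by term gives $A_m(f_m) - A_m(\Xi_m(f)) = \sum_i (a_m^{(i)} - a^{(i)})\, A_m(g_m^{(i)})$. For each fixed $i$, the assumption $A = \lim_m A_m$ together with $g^{(i)} \in \sP_{as}^+$ implies that $(A_m(g_m^{(i)}))_m = (A_m(\Xi_m(g^{(i)})))_m$ is convergent in the Ion--Wu sense with limit $A(g^{(i)})$.

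The remaining step is the expected continuity lemma: if $(s_m)$ is a sequence of polynomials convergent in the sense of Definition \ref{defn6} and $(c_m)$ is a $t$-adically null sequence of scalars in $\mathbb{Q}(q,t)$, then $(c_m s_m)$ converges to $0$. This is proved by writing $s_m = h'_m + \sum_j b_m^{(j)} r_m^{(j)}$ with the required auxiliary data, multiplying through by $c_m$, and observing that $(c_m)$ and each $(c_m b_m^{(j)})$ are $t$-adically null since $\mathbb{Q}(q,t)$ is a topological ring in the $t$-adic topology, while the polynomial sequences $h'_m$ and $r_m^{(j)}$ (and their $\sP_\infty^+$-limits) are unchanged; the scalar sequences $c_m$ and $c_m b_m^{(j)}$ then play the role of the $a_m^{(i)}$'s in the template, with all scalar limits equal to $0$. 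Applying this lemma to each summand and summing over $i$ yields $\lim_m A_m(f_m - \Xi_m(f)) = 0$, completing the proof.

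The main obstacle is precisely this continuity lemma: Ion--Wu convergence bundles three different types of data (a stabilizing polynomial piece, auxiliary polynomial sequences, and $t$-adically convergent scalar coefficients), and one must confirm that scaling by a null scalar sequence preserves this structure. This is a bookkeeping exercise rather than a deep argument, but it is the only point that does not follow formally; once it is in place, everything else is an unwinding of definitions.
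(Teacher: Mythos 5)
The paper does not actually prove this proposition: it is imported verbatim from Ion--Wu (Prop.\ 6.21 of \cite{Ion_2022}), so there is no in-paper argument to compare yours against. On its own merits your proof is correct and is the argument one would expect: reduce to $\lim_m A_m(f_m-\Xi_m(f))=0$, use the compatibility conditions to get $f_m-\Xi_m(f)=\sum_i(a_m^{(i)}-a^{(i)})g_m^{(i)}$, and then invoke a ``null scalar times convergent sequence converges to zero'' lemma. The only point you should make airtight is the one you flag yourself: when you multiply $s_m=h'_m+\sum_j b_m^{(j)}r_m^{(j)}$ by a null sequence $c_m$, the piece $c_m h'_m$ must be reclassified as an ``$a_m g_m$''-type term, and Definition \ref{defn6} then requires the limit of $(h'_m)$ to lie in $\sP_{as}^{+}$, which the definition of convergence does not guarantee for the $h$-component in general. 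In your application this is automatic: $s_m=A_m(\Xi_m(g^{(i)}))$ has limit $A(g^{(i)})\in\sP_{as}^{+}$ by the definition of operator convergence, and since $h'=A(g^{(i)})-\sum_j b^{(j)}r^{(j)}$ with every term on the right in the subspace $\sP_{as}^{+}$, so is $h'$. With that one sentence added, the argument is complete and self-contained, which is arguably an improvement over the paper's bare citation.
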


This is a sort of continuity statement for convergent sequences of operators. The utility of the above proposition is that for an operator arising as the limit of finite variable operators, $A = \lim_{m} A_m$ say, we can use \textbf{\textit{any}} sequence $(f_m)_{m \geq 1}$ converging to $f \in \sP_{as}^{+}$ in order to calculate $A(f)$. 

It is easy to verify the following proposition using Proposition \ref{limits proposition}.

\begin{prop}\label{product of limits proposition}\cite{Ion_2022}
    If $A = \lim_{m} A_m$ and $B = \lim_{m} B_m$ then $AB = \lim_{m} A_mB_m.$
\end{prop}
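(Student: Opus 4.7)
The plan is to derive this directly from the continuity statement in Proposition \ref{limits proposition} by applying it twice. Concretely, fix $f \in \sP_{as}^{+}$; we need to show the sequence $\bigl(A_m B_m(\Xi_m(f))\bigr)_{m \geq 1}$ converges in the Ion--Wu sense to $A(B(f))$, since this is exactly the statement $AB = \lim_m A_m B_m$.

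First I would observe that the sequence $(\Xi_m(f))_{m \geq 1}$ itself is a convergent sequence with limit $f$: taking $N = 0$ and $h_m := \Xi_m(f)$ in Definition \ref{defn6}, compatibility with the quotient maps $\Xi_m$ holds by construction, and the limit $h = f$ lies in $\sP_{as}^{+}$ by hypothesis. Applying Proposition \ref{limits proposition} with the operator limit $B = \lim_m B_m$ and the function limit $f = \lim_m \Xi_m(f)$ then yields
\begin{equation*}
B(f) \;=\; \lim_m B_m(\Xi_m(f)).
\end{equation*}
In other words, setting $g_m := B_m(\Xi_m(f))$, the sequence $(g_m)_{m \geq 1}$ is a convergent sequence in the sense of Definition \ref{defn6} whose limit is $g := B(f) \in \sP_{as}^{+}$.

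Next I would apply Proposition \ref{limits proposition} a second time, now to the operator limit $A = \lim_m A_m$ and the function limit $g = \lim_m g_m$ just produced. This gives
\begin{equation*}
A(g) \;=\; \lim_m A_m(g_m) \;=\; \lim_m A_m B_m(\Xi_m(f)).
\end{equation*}
Since $A(g) = A(B(f)) = (AB)(f)$, this is exactly the required convergence, valid for every $f \in \sP_{as}^{+}$. Hence $AB = \lim_m A_m B_m$.

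There is no genuine obstacle here: the whole argument is a two-step invocation of Proposition \ref{limits proposition}, and the only small point to verify is that $(\Xi_m(f))_{m \geq 1}$ qualifies as a convergent sequence with limit $f$, which is immediate from the definition. The essential content has already been packaged into Proposition \ref{limits proposition}, so the proof is short.
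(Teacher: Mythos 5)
Your proof is correct and is exactly the argument the paper intends: the paper gives no written proof, remarking only that the proposition "is easy to verify using Proposition \ref{limits proposition}," and your two-step invocation of that continuity statement (noting along the way that $B(f)=\lim_m B_m(\Xi_m(f))$ is already the definition of the operator limit $B$) is precisely that verification.
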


\subsection{Important Limit Operators}

In this section we detail the construction of some of the important operators on $\sP_{as}^{+}$ which will be required later. The fact that the operators as presented below are in fact well defined is non-trivial and follows from \cite{Ion_2022} and \cite{MBWArxiv}.

\begin{defn}
We define the operators $X_i$,$T_i$,$\y_i$,$\pi$, $\widetilde{\pi}$ on $\sP_{as}^{+}$ by the following limits:
    \begin{itemize}
        \item $X_i := \lim_{n} X_i $
        \item $T_i := \lim_{n} T_i $
        \item $\epsilon_k := \lim_{n} \epsilon_k^{(n)}$
        \item $\y_i := \lim_{n} t^{n-i+1}T_{i-1}\cdots T_1 \rho \pi_n T_{n-1}^{-1}\cdots T_i^{-1}$
        \item $\pi := \lim_{n} \pi_n$
        \item $\widetilde{\pi} := \lim_{n} \widetilde{\pi}_n.$
    \end{itemize}
\end{defn}

We may explicitly describe the action of some of these operators:

\begin{itemize}
    \item $X_i(f) = x_if$
    \item $T_i(f) = s_if + (1-t)x_i \frac{f-s_if}{x_i-x_{i+1}}$
    \item $\pi(x_1^{a_1}\cdots x_k^{a_k}F[X]) = x_2^{a_1}\cdots x_{k+1}^{a_k}F[X+(q-1)x_1]$
    \item $\widetilde{\pi}(x_1^{a_1}\cdots x_k^{a_k}F[X]) = x_1T_1^{-1}\cdots T_{k}^{-1}x_1^{a_1}\cdots x_k^{a_k}F[X].$ 
\end{itemize}

The operators $\y_i$ were originally constructed by Ion-Wu and are known as the \textbf{\textit{limit Cherednik operators}}. They are limits of the \textit{\textbf{deformed Cherednik operators}} $\widetilde{Y}_i^{(n)}:= t^{n-i+1}T_{i-1}\cdots T_1 \rho \pi_n T_{n-1}^{-1}\cdots T_i^{-1}$ which have the important property that $$\widetilde{Y}_i^{(n)}X_i = t^nY_i^{(n)}X_i.$$ It is non-trivial that not only are the $\y_i$ operators well defined, but that they in fact mutually commute i.e. $[\y_i,\y_j] = 0.$

It is a result of Ion-Wu that the following representation of $\sH^{+}$ is well defined.

\begin{defn}\cite{Ion_2022}
    The standard representation of $\sH^{+}$ consists of $\sP_{as}^{+}$ with the action generated by the operators $X_i, \y_i,T_i.$
\end{defn}

We may give an explicit description of the action of the limit Cherednik operators $\y_i$ using the stable-limit non-symmetric Macdonald functions.

\begin{defn} \label{stable-limit non-sym MacD function defn}\cite{MBWArxiv}
    For $(\mu|\lambda) \in \Phi$ define the stable-limit non-symmetric Macdonald function $\widetilde{E}_{(\mu|\lambda)}$ as 
    $$\widetilde{E}_{(\mu|\lambda)}[x_1,x_2,\ldots;q,t] := \lim_{n} \epsilon_{\ell(\mu)}^{(n)}\left(E_{\mu*\lambda*0^{n-(\ell(\mu)+\ell(\lambda))}}(x_1,\ldots,x_n;q,t)\right). $$
\end{defn}

The following gives an explicit characterization of the action of the $\y_i$ on $\sP_{as}^{+}.$

\begin{thm}\label{weight basis theorem}\cite{MBWArxiv}
    The $\widetilde{E}_{(\mu|\lambda)}$ are a $\y$-weight basis for $\mathscr{P}_{as}^{+}$. Further, we have that 
    $$\y_i(\widetilde{E}_{(\mu|\lambda)}) = \widetilde{\alpha}_{(\mu|\lambda)}(i) \widetilde{E}_{(\mu|\lambda)} $$ where $\widetilde{\alpha}_{(\mu|\lambda)}(i)$ is given explicitly by 

    \[ \widetilde{\alpha}_{(\mu|\lambda)}(i) = 
    \begin{cases}
    q^{\mu_i}t^{\ell(\mu)+\ell(\lambda)+1-\beta_{\mu*\lambda}(i)} & i\leq \ell(\mu) , \mu_i \neq 0 \\
    0 & \text{otherwise}
     \end{cases}
\]

and 
$$\beta_{\nu}(i) := \#\{j: 1\leq j \leq i ~, \nu_j \leq \nu_i\} + \#\{j: i < j \leq \ell(\nu) ~, \nu_i > \nu_j\}.$$
\end{thm}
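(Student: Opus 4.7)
The plan is to reduce the statement to finite-variable Cherednik theory via the limits proposition (Proposition \ref{limits proposition}), using the definition of $\widetilde{E}_{(\mu|\lambda)}$ as a limit of partially symmetrized non-symmetric Macdonald polynomials $E_{\mu*\lambda*0^{n-(\ell(\mu)+\ell(\lambda))}}$ under the idempotent $\epsilon^{(n)}_{\ell(\mu)}$. The first task is to verify that each $\widetilde{E}_{(\mu|\lambda)}$ is well defined and has a triangular leading term of the form $x^{\mu} m_\lambda[x_{\ell(\mu)+1}+x_{\ell(\mu)+2}+\cdots] + (\text{lower in some order})$ inside $\mathscr{P}(\ell(\mu))^+$. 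Combined with the count of the homogeneous dimensions of $\mathscr{P}(k)^+$ recorded in the discussion after Definition \ref{defn5}, which says that the graded multiplicities are indexed by pairs $(\mu|\lambda) \in \Phi$ with $\ell(\mu) \leq k$, this triangularity directly furnishes the basis statement.

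Next I would compute the eigenvalue. Write $\y_i = \lim_n \widetilde{Y}_i^{(n)}$ with $\widetilde{Y}_i^{(n)} = t^{n-i+1}T_{i-1}\cdots T_1 \rho \pi_n T_{n-1}^{-1}\cdots T_i^{-1}$, and apply Proposition \ref{limits proposition} to the pair $(\widetilde{Y}_i^{(n)},\,\epsilon^{(n)}_{\ell(\mu)} E_{\nu_n})$, where $\nu_n = \mu*\lambda*0^{n-(\ell(\mu)+\ell(\lambda))}$. The key intermediate identity is that $\widetilde{Y}_i^{(n)}$ is a polynomial in the $Y_j^{(n)}$'s up to the $\rho$ factor; on the image of $\epsilon^{(n)}_{\ell(\mu)}$ applied to $E_{\nu_n}$, the $\rho$ kills precisely the terms not divisible by $x_1$, which is exactly the condition needed so that the eigenvalue is non-zero only when $i \leq \ell(\mu)$ and $\mu_i \neq 0$. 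On the remaining terms, the classical formula for the $Y_i^{(n)}$-eigenvalue on $E_{\nu_n}$ (a monomial $q^{(\nu_n)_i} t^{\text{something}}$ governed by the inversion statistic of $\nu_n$) combined with the rescaling $t^{n-i+1}$ collapses, after substituting $\nu_n = \mu*\lambda*0^{n-(\ell(\mu)+\ell(\lambda))}$, to the asserted expression $q^{\mu_i} t^{\ell(\mu)+\ell(\lambda)+1-\beta_{\mu*\lambda}(i)}$. The crucial point is that the zero tail of $\nu_n$ contributes $(n - \ell(\mu) - \ell(\lambda))$ to the statistic $\beta_{\nu_n}(i)$ in a way that cancels the $t^n$ factor coming from the rescaling, leaving an $n$-independent limit.

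The main obstacle will be controlling the interaction of $\epsilon_{\ell(\mu)}^{(n)}$ with $\widetilde{Y}_i^{(n)}$ cleanly enough to justify passing the eigenvalue through the limit, and in particular establishing the vanishing $\y_i(\widetilde{E}_{(\mu|\lambda)}) = 0$ for $i > \ell(\mu)$ or $\mu_i = 0$. For $i > \ell(\mu)$ one needs that $\epsilon^{(n)}_{\ell(\mu)}$ symmetrizes in variables whose corresponding $\nu_n$-entries eventually become zero, and that the $\rho$ factor therefore annihilates the image; for $\mu_i = 0$ with $i \leq \ell(\mu)$, reducedness of $\mu$ combined with the triangularity ensures that such indices do not occur, so the only genuine vanishing to check is the first case. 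Once these two vanishing statements and the non-zero eigenvalue computation are in hand, linear independence of the $\widetilde{E}_{(\mu|\lambda)}$ can either be read off from the distinctness of $\y$-weights after one argues distinctness (using that different $(\mu|\lambda)$ produce different multisets of nonzero eigenvalues), or established independently via the leading term from step one, and the theorem follows.
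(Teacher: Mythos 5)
A preliminary remark: the paper states this theorem as a citation to \cite{MBWArxiv} and contains no proof of it, so there is no in-paper argument to compare against; I am judging your proposal against what the cited construction actually requires. Your overall strategy is the right one: realize $\widetilde{E}_{(\mu|\lambda)}$ as $\lim_n \epsilon^{(n)}_{\ell(\mu)}E_{\nu_n}$ with $\nu_n = \mu*\lambda*0^{n-(\ell(\mu)+\ell(\lambda))}$, push $\widetilde{Y}_i^{(n)}$ through the limit via Proposition \ref{limits proposition}, use $\widetilde{Y}_i^{(n)}X_i = t^nY_i^{(n)}X_i$ to reduce to the classical spectrum of the Cherednik operators, and observe that each of the $n-\ell(\mu)-\ell(\lambda)$ trailing zeros of $\nu_n$ contributes $1$ to $\beta_{\nu_n}(i)$, which is exactly what cancels the $t^{n}$ rescaling and leaves the $n$-independent exponent $\ell(\mu)+\ell(\lambda)+1-\beta_{\mu*\lambda}(i)$. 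The basis statement via a triangular leading term plus the dimension count for $\sP(k)^{+}$ is also the correct mechanism; note, however, that your alternative suggestion of deducing linear independence from distinctness of $\y$-weights cannot work, since the $\y$-weight spaces are infinite dimensional (this is precisely the defect of the $\y_i$ that motivates introducing $\Delta$ in this paper; e.g.\ $(\,(1)\,|\,(2)\,)$ and $(\,(1)\,|\,(3)\,)$ have identical $\y$-weights), so the triangularity argument is not optional.

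The genuine gap is your treatment of the case $i\leq \ell(\mu)$, $\mu_i=0$. You assert that reducedness of $\mu$ rules this case out; it does not. A composition is reduced when its \emph{last} entry is nonzero, so $(0,1)$ and $(1,0,2)$ are legitimate elements of $\Compred$, and such pairs are indispensable for the dimension count to come out right (in degree $1$ the three basis elements of the degree-$1$ part of $\sP(2)^{+}$ are indexed by $((1)|\emptyset)$, $((0,1)|\emptyset)$, and $(\emptyset|(1))$). Consequently the vanishing $\y_i(\widetilde{E}_{(\mu|\lambda)})=0$ at an internal zero $\mu_i=0$ is a substantive claim that your argument never addresses: the identity $\widetilde{Y}_i^{(n)}X_i=t^nY_i^{(n)}X_i$ gives no information there because the relevant terms need not be divisible by $x_i$, and one must instead argue directly that the projection $\rho$ (after conjugating by $T_{i-1}\cdots T_1$, which moves the $i$-th slot into the first position) annihilates the relevant component of $\epsilon^{(n)}_{\ell(\mu)}E_{\nu_n}$ uniformly in $n$. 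This is the same mechanism you invoke for $i>\ell(\mu)$, but it must also be carried out for internal zeros; until it is, the case analysis in the eigenvalue formula is incomplete.
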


\begin{example}
    We can see directly that $\y_1(\widetilde{E}_{(1|\emptyset)}) = qt \widetilde{E}_{(1|\emptyset)}:$
    \begin{align*}
        \y_1(\widetilde{E}_{(1|\emptyset)}) &= \y_1(x_1)\\
        &= \lim_{n} t^n \rho \pi_n T_{n-1}^{-1}\cdots T_1^{-1}(x_1)\\
        &= \lim_{n} t^n \rho \pi_n(t^{-(n-1)}x_n) \\
        &= \lim_{n} qt \rho(x_1)\\
        &= \lim_{n} qtx_1\\
        &= qtx_1.\\
    \end{align*}
\end{example}

\section{Constructing Delta Operators on $\sP_{as}^{+}$}

We begin by recalling the definition of the operator $\Psi_{p_1}: \sP_{as}^{+} \rightarrow \sP_{as}^{+}$ from the author's prior work. We opt to rename this operator $\Delta:= \Psi_{p_1}$ to better align with the notation used in Macdonald theory.

\begin{defn}
    For a symmetric function $F$ define the operators $\Psi_{F}^{(n)}: \sP_n^{+} \rightarrow \sP_n^{+}$ by 
    $$\Psi_{F}^{(n)}:= F[t^nY_1^{(n)}+\ldots+ t^n Y_n^{(n)}].$$ For $\nu \in \Par$ we denote 
    $$\kappa_{\nu}(q,t):= \sum_{i=1}^{\infty} q^{\nu_i}t^{i} \in \mathbb{Q}(q,t).$$
\end{defn}

\begin{thm}\label{almost sym macdonald operator}\cite{MBWArxiv}
    The sequence of operators $(\Psi_{p_1}^{(n)})_{n \geq 1}$ converges to an operator $\Delta$ on $\sP_{as}^{+}$ with the following properties:
    \begin{itemize}
        \item $\Delta(\widetilde{E}_{(\mu|\lambda)}) = \kappa_{\sort(\mu*\lambda)}(q,t)\widetilde{E}_{(\mu|\lambda)}$
        \item $[\Delta, \y_i ] = 0$
        \item $[\Delta, T_i] = 0$
        \item The joint $\Delta,\y_1,\y_2,\ldots$ weight spaces in $\sP_{as}^{+}$ are all 1-dimensional.
        \item $$\Delta|_{x_1\cdots x_k\sP(k)^{+}} = \y_1+\ldots + \y_k + \frac{t}{1-t} \epsilon_kT_k \cdots T_1 \pi.$$
    \end{itemize}
\end{thm}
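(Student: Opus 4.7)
The plan is to first establish bullet (i)---which simultaneously proves convergence of the sequence $(\Psi_{p_1}^{(n)})_{n\geq 1}$---then deduce bullets (ii)--(iv) as formal consequences, and finally attack bullet (v) by a direct manipulation of the sum $\sum_i t^nY_i^{(n)}$. By Remark \ref{useful defn of convergence}, to verify convergence of operators it suffices to test against a spanning family; the stable-limit non-symmetric Macdonald basis $\{\widetilde{E}_{(\mu|\lambda)}\}$ is the natural one, via its defining approximation $\widetilde{E}_{(\mu|\lambda)} = \lim_n \epsilon_{\ell(\mu)}^{(n)}\bigl(E_{\mu*\lambda*0^{n-\ell(\mu)-\ell(\lambda)}}\bigr)$ from Definition \ref{stable-limit non-sym MacD function defn}. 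Since $Y_1^{(n)}+\ldots+Y_n^{(n)}$ is symmetric in the $Y_j^{(n)}$'s, it commutes with every $T_j$ and hence with $\epsilon_{\ell(\mu)}^{(n)}$, so $\Psi_{p_1}^{(n)}$ passes through the idempotent and acts on $E_{\mu*\lambda*0^{n-\ell}}$ by its known scalar $t^n\sum_{i=1}^n \alpha^{(n)}_\nu(i)$. Splitting this sum in the $t$-adic topology between indices corresponding to positive parts of $\mu*\lambda$ (which contribute precisely the $\y_i$-eigenvalues $\widetilde{\alpha}_{(\mu|\lambda)}(i)$) and those corresponding to trailing zeros (which form a geometric series summing to $\frac{t^{\ell+1}}{1-t}$) assembles exactly into $\kappa_{\sort(\mu*\lambda)}(q,t)$. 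Proposition \ref{limits proposition} then promotes this scalar computation to bullet (i).

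Bullets (ii) and (iii) follow immediately from Proposition \ref{product of limits proposition}: in $\mathscr{H}_n^+$ the element $Y_1^{(n)}+\ldots+Y_n^{(n)}$ commutes with every $Y_j^{(n)}$ and every $T_j$, so the corresponding Ion-Wu limits satisfy $[\Delta,\y_i]=0$ and $[\Delta,T_i]=0$. Bullet (iv) is a combinatorial separation argument: the $\y$-weight $\widetilde{\alpha}_{(\mu|\lambda)}$ already encodes the reduced composition $\mu$ (the positions of nonzero eigenvalues give $\ell(\mu)$, and the $q$-exponents recover each $\mu_i$), while $\kappa_{\sort(\mu*\lambda)}(q,t) = \sum_{i\geq 1}q^{\nu_i}t^i$ uniquely determines the partition $\nu=\sort(\mu*\lambda)$. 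Removing the multiset $\sort(\mu)$ from $\nu$ then recovers $\lambda$, so no two distinct pairs in $\Phi$ share joint $(\Delta,\y)$-eigenvalues.

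The final bullet is the main technical obstacle. Restricted to $x_1\cdots x_k\sP(k)^+$ every polynomial is divisible by each $x_i$ with $i\leq k$, and the identity $\widetilde{Y}_i^{(n)}X_i = t^nY_i^{(n)}X_i$ combined with $\y_i = \lim_n \widetilde{Y}_i^{(n)}$ immediately gives $\lim_n t^nY_i^{(n)} = \y_i$ on this subspace for each such $i$, accounting for the $\y_1+\ldots+\y_k$ summand. What remains is the identity
\begin{equation*}
    \lim_{n} t^n\bigl(Y_{k+1}^{(n)}+\ldots+Y_n^{(n)}\bigr)\Big|_{x_1\cdots x_k\sP(k)^+} = \frac{t}{1-t}\epsilon_k T_k\cdots T_1\pi.
\end{equation*}
My approach is to exploit that elements of $\sP(k)^+$ are symmetric in $x_{k+1},\ldots,x_n$ after restriction to $n$ variables, so $\epsilon_k^{(n)}$ acts as the identity on them and the symmetric sum $Y_{k+1}^{(n)}+\ldots+Y_n^{(n)}$ commutes with $\epsilon_k^{(n)}$. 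Using the braid relations $T_jY_j^{(n)}T_j = tY_{j+1}^{(n)}$ to write each $Y_i^{(n)}$ with $i>k$ as a braid conjugate of $Y_{k+1}^{(n)}$, together with $Y_1^{(n)} = \pi_nT_{n-1}^{-1}\cdots T_1^{-1}$ and its transport past $T_1,\ldots,T_k$, the sum should rearrange on the given subspace into the pattern $\epsilon_k^{(n)}T_k\cdots T_1\pi_n$ times a $t$-adically convergent sum of scalars whose geometric tail collapses to $\frac{t}{1-t}$ as $n\to\infty$. The most delicate step is verifying that this rewriting actually produces a convergent sequence in the sense of Definition \ref{defn6} so that Proposition \ref{limits proposition} applies; I would handle this inductively on $k$, using the explicit actions of $\epsilon_k$ and $\pi$ recorded in the excerpt to confirm that each intermediate sum has the convergent form required by Remark \ref{useful defn of convergence}.
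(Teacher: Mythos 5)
Your outlines of bullets (iv) and (v) are sound and essentially match the paper's route: the $r=1$ case of Proposition \ref{convergence for elementary sym}, via Lemma \ref{important lemma}, is precisely your proposed rearrangement of $\sum_{i>k}t^nY_i^{(n)}$ acting on $\epsilon_k^{(n)}$-invariants into $t\left(\tfrac{1-t^{n-k}}{1-t}\right)\epsilon_k^{(n)}T_k\cdots T_1\pi_n\epsilon_k^{(n)}$, whose scalar converges $t$-adically to $\tfrac{t}{1-t}$. However, there are two genuine gaps. First, your convergence argument for bullet (i) runs Proposition \ref{limits proposition} backwards. Operator convergence requires that $\Psi_{p_1}^{(n)}(\Xi_n(f))$ converge for every $f\in\sP_{as}^{+}$, where $\Xi_n(f)$ is the literal truncation of $f$; the approximants $\epsilon_{\ell(\mu)}^{(n)}\bigl(E_{\mu*\lambda*0^{n-\ell(\mu)-\ell(\lambda)}}\bigr)$ form a \emph{different} sequence converging to $\widetilde{E}_{(\mu|\lambda)}$, and verifying that $\Psi_{p_1}^{(n)}$ applied to them converges does not establish operator convergence. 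Proposition \ref{limits proposition} only licenses substituting an arbitrary approximating sequence \emph{after} the operator limit is known to exist. This is exactly why the paper's proof of the generalization computes on the spanning set $T_{\sigma}x^{\lambda}F[x_1+\ldots+x_n]$, whose members are honest truncations in the sense of Remark \ref{useful defn of convergence}, and only afterwards evaluates eigenvalues on the $\widetilde{E}_{(\mu|\lambda)}$ basis.

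Second, bullet (ii) does not follow from commutativity inside $\mathscr{H}_n^{+}$ plus Proposition \ref{product of limits proposition}. The limit Cherednik operator $\y_i$ is the limit of the deformed operators $\widetilde{Y}_i^{(n)}=t^{n-i+1}T_{i-1}\cdots T_1\rho\pi_nT_{n-1}^{-1}\cdots T_i^{-1}$, not of $t^nY_i^{(n)}$ (the latter do not converge on all of $\sP_{as}^{+}$; the two agree only on $x_i\sP_n^{+}$). The $\widetilde{Y}_i^{(n)}$ do not commute with $Y_1^{(n)}+\ldots+Y_n^{(n)}$ at finite $n$ --- the paper even stresses that they fail to commute with one another --- so the finite-rank commutation you invoke is not available. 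Your argument is valid for $[\Delta,T_i]=0$, but for $[\Delta,\y_i]=0$ the correct route, and the one the paper uses, is to observe that $\Delta$ and all the $\y_i$ are simultaneously diagonalized on the basis $\widetilde{E}_{(\mu|\lambda)}$ of Theorem \ref{weight basis theorem}; that is, bullet (ii) should be deduced from bullet (i) once bullet (i) is correctly established.
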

\begin{remark}
    The last property of the operator $\Delta$ listed above is not explicitly stated in \cite{MBWArxiv} but follows from the proof of Theorem \ref{almost sym macdonald operator} and the fact that for $1\leq i \leq k-1$

    \begin{align*}
        &(\y_1 +\ldots + \y_k + \frac{t}{1-t} \epsilon_k T_k \cdots T_1 \pi)T_i \\
        &= (\y_1 +\ldots + \y_k)T_i + \frac{t}{1-t} \epsilon_k T_k \cdots T_1 \pi T_i \\
        &= T_i (\y_1 +\ldots + \y_k) + \frac{t}{1-t} \epsilon_k T_k \cdots T_1 T_{i+1} \pi \\
        &= T_i (\y_1 +\ldots + \y_k) + \frac{t}{1-t} \epsilon_k T_i T_k \cdots T_1 \pi \\
        &= T_i(\y_1 +\ldots + \y_k) + T_i \frac{t}{1-t} \epsilon_k T_k \cdots T_1 \pi \\
        &= T_i (\y_1 +\ldots + \y_k + \frac{t}{1-t} \epsilon_k T_k \cdots T_1 \pi). \\
    \end{align*}
    This fact will be generalized in Theorem \ref{convergence theorem}.
\end{remark}

\begin{remark}
    The operator $\Delta$ cannot be expressed entirely in terms of the limit Cherednik operators $\y_i$ and in particular, 
    $$\Delta \neq \y_1+\y_2+\y_3+\ldots.$$
    It is easy to see this since each of the operators $\y_i$ annihilates $\Lambda$ whereas $\Delta$ acts diagonally on $\Lambda$ with non-zero eigenvalues. This is very different than the finite variable situation where the action of the finite variable Macdonald operator is roughly given by $Y_1^{(n)}+\ldots + Y_n^{(n)}.$
\end{remark}

At the end of the author's prior paper \cite{MBWArxiv} it is conjectured that for any symmetric function $F \in \Lambda$ the sequence of operators $(\Psi_{F}^{(n)})_{n \geq 1}$ converges to an operator on $\sP_{as}^{+}.$ An affirmation of this conjecture has direct implications related to the conjectural partially symmetric elliptic Hall algebras mentioned by Carlsson-Mellit in \cite{CM_2015} and the extended double Dyck path algebra $\mathbb{B}_{q,t}^{\text{ext}}$ of Gonz\'{a}lez-Gorsky-Simental in \cite{gonzález2023calibrated}. 

The main purpose of this section is to give a proof of this conjecture. The proof involves a detailed computation which will be done in stages. We will start with some of the required preliminaries. 

\subsection{Preliminaries}

There are a few elementary technical results we will need before we are able to prove the main result of this section Theorem \ref{convergence theorem}.

For the remainder of this section we consider $n,k,r$ with $ n > k+r \geq 1.$ We begin by expressing certain partially symmetric polynomials in the Cherednik elements $Y_i$ in terms of products of consecutive Cherednik elements.

\begin{lem}\label{expansion of partially elementary symmetric Cherednik operators lemma}
    $$e_r\left[t^nY_{k+1}^{(n)}+\ldots + t^nY_n^{(n)}\right] = \sum_{\sigma \in \mathfrak{S}_{(1^k,n-k)}/\mathfrak{S}_{(1^k,r,n-k-r)}}t^{-\ell(\sigma)}T_{\sigma} t^{rn}Y_{k+1}^{(n)}\cdots  Y_{k+r}^{(n)} T_{\sigma^{-1}}$$
\end{lem}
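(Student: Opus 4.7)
The plan is to recognize this as a standard affine Hecke manipulation: both sides should equal $t^{rn}\sum_{k+1\le i_1<\cdots<i_r\le n} Y_{i_1}\cdots Y_{i_r}$, and the sum on the right of the lemma is just a reorganization of this sum indexed by shuffles. I will proceed in three steps.

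\textbf{Step 1: Interpret the coset.} First I would observe that the minimal length coset representatives of $\mathfrak{S}_{(1^k,n-k)}/\mathfrak{S}_{(1^k,r,n-k-r)}$ are exactly the shuffles $\sigma$ which fix $\{1,\ldots,k\}$ pointwise and satisfy $\sigma(k+1)<\cdots<\sigma(k+r)$ and $\sigma(k+r+1)<\cdots<\sigma(n)$. Such $\sigma$ are in bijection with $r$-subsets $\{i_1<\cdots<i_r\}\subset\{k+1,\ldots,n\}$ via $\sigma(k+s)=i_s$, and the number of inversions is $\ell(\sigma)=\sum_{s=1}^{r}(i_s-(k+s))$. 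Under this bijection, the right-hand side of the lemma sums precisely over the monomials appearing in $t^{rn}e_r(Y_{k+1},\ldots,Y_n)$, so it suffices to show that for each such shuffle $\sigma$,
\begin{equation*}
T_\sigma\, Y_{k+1}\cdots Y_{k+r}\, T_{\sigma^{-1}} \;=\; t^{\ell(\sigma)}\, Y_{i_1}\cdots Y_{i_r}.
\end{equation*}

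\textbf{Step 2: The atomic move.} I would then prove the one-step identity: for any $j<m$,
\begin{equation*}
T_{m-1}T_{m-2}\cdots T_j\, Y_j\, T_j T_{j+1}\cdots T_{m-1}\;=\;t^{m-j}Y_m.
\end{equation*}
This is a straightforward induction on $m-j$ using relation \eqref{def-iii}, namely $T_iY_iT_i=tY_{i+1}$, together with the commutation $T_iY_\ell=Y_\ell T_i$ for $\ell\notin\{i,i+1\}$. The base case is exactly $T_jY_jT_j=tY_{j+1}$, and the inductive step adds one $T_{i}$-conjugation on the outside using that $T_{i}$ commutes with everything in the middle except the current top index.

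\textbf{Step 3: Assemble the shuffle.} To handle a general shuffle $\sigma$ with image $\{i_1<\cdots<i_r\}$, I would use the canonical reduced decomposition in which we successively transport the rightmost letter first: explicitly
\begin{equation*}
\sigma \;=\; (s_{i_r-1}\cdots s_{k+r})(s_{i_{r-1}-1}\cdots s_{k+r-1})\cdots(s_{i_1-1}\cdots s_{k+1}),
\end{equation*}
which is reduced because the total length $\sum_s(i_s-(k+s))$ equals $\ell(\sigma)$. Conjugating $Y_{k+1}\cdots Y_{k+r}$ by this expression from the outside in, I apply the atomic move of Step 2 once per block: the outermost block transports $Y_{k+r}$ to $Y_{i_r}$ contributing a factor $t^{i_r-(k+r)}$, the next transports $Y_{k+r-1}$ to $Y_{i_{r-1}}$, and so on. The crucial point allowing this to iterate cleanly is that when we are moving $Y_{k+s}$ from position $k+s$ to $i_s$, all the $T$'s involved have indices in $\{k+s,\ldots,i_s-1\}$, which is disjoint from $\{i_{s+1},\ldots,i_r\}$ since the target positions were chosen in increasing order. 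Hence these $T$'s commute with the already-placed $Y_{i_{s+1}},\ldots,Y_{i_r}$, and commute with $Y_{k+1},\ldots,Y_{k+s-1}$ as well. Combined with the commutativity of the $Y_j$'s, the moves do not interfere, and the factors of $t$ multiply to $t^{\ell(\sigma)}$.

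Summing over shuffles, multiplying by $t^{-\ell(\sigma)}t^{rn}$, and identifying with the elementary symmetric function completes the proof. The main technical obstacle is Step 3: confirming that the chosen reduced expression is indeed reduced and that the ``nested transport'' of the $Y$'s produces no hidden cross-terms. This hinges entirely on the disjointness of the index sets of the $T$'s used at different stages, which follows from $i_1<\cdots<i_r$ and from the strict inequalities $k+s\le i_s$.
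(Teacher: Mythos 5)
Your Steps 1 and 2 are correct and reproduce the paper's own reduction: the paper likewise identifies the minimal coset representatives with $r$-subsets $\{i_1<\cdots<i_r\}\subset\{k+1,\ldots,n\}$ and reduces the lemma to the single identity $T_\sigma Y_{k+1}^{(n)}\cdots Y_{k+r}^{(n)}T_{\sigma^{-1}}=t^{\ell(\sigma)}Y_{i_1}^{(n)}\cdots Y_{i_r}^{(n)}$. Where the paper then inducts on the Bruhat order, incrementing one index $i_j\mapsto i_j+1$ at a time via $Y_{i_j+1}=t^{-1}T_{i_j}Y_{i_j}T_{i_j}$, you instead exhibit an explicit reduced word and transport each $Y$ in one block; this is a legitimate variant of the same argument, but your Step 3 contains a concrete error. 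The displayed word $\sigma=(s_{i_r-1}\cdots s_{k+r})\cdots(s_{i_1-1}\cdots s_{k+1})$ does not represent the shuffle. Take $k=0$, $n=4$, $r=2$, $(i_1,i_2)=(2,4)$: your word is $(s_3s_2)(s_1)$, which is the permutation $[4,1,2,3]$ in one-line notation, whereas the shuffle with $\sigma(1)=2$, $\sigma(2)=4$ is $[2,4,1,3]$, whose reduced word is $(s_1)(s_3s_2)$. The blocks must appear in \emph{increasing} order of $s$, i.e.\ $\sigma=(s_{i_1-1}\cdots s_{k+1})(s_{i_2-1}\cdots s_{k+2})\cdots(s_{i_r-1}\cdots s_{k+r})$.

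This ordering is not cosmetic: it is exactly what makes your non-interference argument work. In the sandwich $T_\sigma\,Y_{k+1}\cdots Y_{k+r}\,T_{\sigma^{-1}}$ the block adjacent to the $Y$'s (the rightmost block of $T_\sigma$) is the one whose atomic move is performed first, so to place $Y_{k+r}\mapsto Y_{i_r}$ first — which is what your disjointness argument requires, since you treat $Y_{i_{s+1}},\ldots,Y_{i_r}$ as ``already placed'' when handling block $s$ — the block $(T_{i_r-1}\cdots T_{k+r})$ must be innermost, not ``outermost.'' With your stated ordering the innermost block is $(T_{i_1-1}\cdots T_{k+1})$, whose index set $\{k+1,\ldots,i_1-1\}$ contains $k+2,\ldots$ whenever $i_1>k+2$; those $T_j$ do not commute with $Y_{k+2},\ldots$, so the claimed clean factorization fails. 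Once the blocks are reversed, every commutation you assert (indices of block $s$ lie in $\{k+s,\ldots,i_s-1\}$, hence commute with $Y_{k+1},\ldots,Y_{k+s-1}$ and with $Y_{i_{s+1}},\ldots,Y_{i_r}$ because $j+1\le i_s<i_{s+1}$) is correct, the word is reduced since its length is $\sum_s(i_s-(k+s))=\ell(\sigma)$, and the proof closes. So the gap is a fixable ordering slip rather than a wrong idea, but as written Step 3 does not compute the conjugation it claims to.
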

\begin{proof}
    Notice that for $\sigma \in \mathfrak{S}_{(1^k,n-k)}/\mathfrak{S}_{(1^k,r,n-k-r)}$ the values $\sigma(k+1),\ldots, \sigma(k+r)$ satisfy $k+1 \leq \sigma(k+1)<\ldots < \sigma(k+r) \leq n$ and uniquely determine $\sigma.$ As such there is a natural bijection $\mathfrak{S}_{(1^k,n-k)}/\mathfrak{S}_{(1^k,r,n-k-r)} \rightarrow \{(i_1,\ldots , i_r)| k+1 \leq i_1<\ldots < i_r \leq n-k  \} $ given by $\sigma \rightarrow (\sigma(k+1),\ldots , \sigma(k+r)).$ Hence, it suffices to show that for all such $\sigma$
    $$Y_{\sigma(k+1)}^{(n)}\cdots Y_{\sigma(k+r)}^{(n)} = t^{-\ell(\sigma)}T_{\sigma} Y_{k+1}^{(n)}\cdots  Y_{k+r}^{(n)} T_{\sigma^{-1}}.$$ We proceed by induction on the Bruhat order on $\mathfrak{S}_{(1^k,n-k)}/\mathfrak{S}_{(1^k,r,n-k-r)}$. Clearly, this formula holds for $\sigma = 1.$ 

    Suppose $ k+1 = i_0 \leq i_1 < \ldots < i_r \leq i_{r+1} = n$ with $i_{j+1} - i_j > 1$ for some $ 0 \leq j \leq r.$ Then 
    \begin{align*}
        &Y_{i_1}^{(n)}\cdots Y_{i_{j-1}}^{(n)}Y_{(i_{j})+1}^{(n)}Y_{i_{j+1}}^{(n)}Y_{i_{j+2}}^{(n)} \cdots Y_{n}^{(n)} \\
        &= Y_{i_1}^{(n)}\cdots Y_{i_{j-1}}^{(n)}(t^{-1}T_{i_{j}}Y_{i_{j}}^{(n)}T_{i_{j}})Y_{i_{j+1}}^{(n)}Y_{i_{j+2}}^{(n)} \cdots Y_{n}^{(n)} \\
        &= t^{-1}T_{i_{j}}Y_{i_1}^{(n)}\cdots Y_{i_{r}}^{(n)}T_{i_{j}}.\\
    \end{align*}

    Now if $\sigma, \sigma' \in \mathfrak{S}_{(1^k,n-k)}/\mathfrak{S}_{(1^k,r,n-k-r)}$ are the unique elements with 
    $\sigma(k+\ell)= i_{\ell}$ for $1 \leq \ell \leq r$ and $\sigma' = s_{i_{j}}\sigma.$ Suppose that $Y_{\sigma(k+1)}^{(n)}\cdots Y_{\sigma(k+r)}^{(n)} = t^{-\ell(\sigma)}T_{\sigma} Y_{k+1}^{(n)}\cdots  Y_{k+r}^{(n)} T_{\sigma^{-1}}.$ Then from the above we find that 
    \begin{align*}
        &Y_{\sigma'(k+1)}^{(n)}\cdots Y_{\sigma'(k+r)}^{(n)} \\
        &= Y_{i_1}^{(n)}\cdots Y_{i_{j-1}}^{(n)}Y_{i_{j}+1}^{(n)}Y_{i_{j+1}}^{(n)}Y_{i_{j+2}}^{(n)} \cdots Y_{n}^{(n)}\\
        &= t^{-1}T_{i_{j}}Y_{i_1}^{(n)}\cdots Y_{i_{r}}^{(n)}T_{i_{j}} \\
        &= t^{-1}T_{i_{j}}Y_{\sigma(k+1)}^{(n)}\cdots Y_{\sigma(k+r)}^{(n)} T_{i_{j}}\\
        &=  t^{-(1+\ell(\sigma))}T_{i_{j}} T_{\sigma} Y_{k+1}^{(n)}\cdots  Y_{k+r}^{(n)} T_{\sigma^{-1}} T_{i_{j}}\\
        &= t^{-\ell(\sigma')}T_{\sigma'} Y_{k+1}^{(n)}\cdots  Y_{k+r}^{(n)} T_{(\sigma')^{-1}}.\\
    \end{align*}
\end{proof}

Now we may write a product of consecutive Cherednik elements in terms of $\pi_n.$

\begin{lem}\label{expansion of product of Cherednik operators}
    $$t^{rn}Y_{k+1}^{(n)} \cdots  Y_{k+r}^{(n)} = t^{(n-k)+\ldots + (n-k-r+1)} (T_k\cdots T_1)(T_{k+1}\cdots T_2)\cdots (T_{k+r-1}\cdots T_r) \pi_n^{r} (T_{n-r}^{-1}\cdots T_{k+1}^{-1})\cdots (T_{n-1}^{-1}\cdots T_{k+r}^{-1}) $$
\end{lem}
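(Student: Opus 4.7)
The plan is to prove the identity by induction on $r$, leveraging two fundamental tools: first, the ``cyclic shift'' relations $\pi_n T_i = T_{i+1}\pi_n$ and (consequently) $\pi_n T_i^{-1} = T_{i+1}^{-1}\pi_n$, valid for $1 \le i \le n-2$, which hold in $\sH_n$ since they hold on the faithful standard representation where $\pi_n$ cyclically shifts variables; second, the cancellation $T_{k+j}^{-1}T_{k+j} = 1$ combined with the commutativity of $T_a, T_b$ when $|a-b|\ge 2$.

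For the base case $r=1$, I would start from $Y_1 = \pi_n T_{n-1}^{-1}\cdots T_1^{-1}$ (an immediate consequence of $\pi_n = Y_1 T_1\cdots T_{n-1}$), iterate the DAHA relation $T_iY_iT_i = tY_{i+1}$ to obtain $Y_{k+1} = t^{-k}(T_k\cdots T_1)\,Y_1\,(T_1\cdots T_k)$, and cancel $T_1^{-1}\cdots T_k^{-1}$ against $T_1\cdots T_k$ to produce
$$t^n Y_{k+1} = t^{n-k}(T_k\cdots T_1)\,\pi_n\,(T_{n-1}^{-1}\cdots T_{k+1}^{-1}),$$
which is precisely the $r=1$ case.

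For the inductive step, I would first establish an auxiliary ``block shift'' identity: for $n-1 \ge a \ge b \ge 2$,
$$(T_a^{-1}T_{a-1}^{-1}\cdots T_b^{-1})\,\pi_n = \pi_n\,(T_{a-1}^{-1}\cdots T_{b-1}^{-1}),$$
derived by iterated application of $T_j^{-1}\pi_n = \pi_n T_{j-1}^{-1}$. Then, multiplying the induction hypothesis (for $r-1$) on the right by the $r=1$ expression for $t^nY_{k+r}$, the product contains the seam $(T_{n-1}^{-1}\cdots T_{k+r-1}^{-1})(T_{k+r-1}\cdots T_1)$. I would process this seam by successively cancelling $T_{k+j}^{-1}T_{k+j}$ at each of the $r-1$ inverse-$T$ blocks from right to left and commuting the disjoint surviving pieces; this extrudes a leftover $(T_k\cdots T_1)$ on the far left and raises the bottom index of each block by one. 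Next, $r-1$ applications of $\pi_n T_i = T_{i+1}\pi_n$ push $\pi_n^{r-1}$ rightward past $(T_k\cdots T_1)$, producing the desired $(T_{k+r-1}\cdots T_r)$. Finally, $r-1$ applications of the block shift identity propagate the remaining isolated $\pi_n$ leftward past the shifted inverse-$T$ blocks, assembling it into $\pi_n^r$ and restoring the block indices to exactly $(T_{n-r+j-1}^{-1}\cdots T_{k+j}^{-1})$ for $j = 1, \ldots, r-1$, with the new rightmost block $(T_{n-1}^{-1}\cdots T_{k+r}^{-1})$ from the $Y_{k+r}$ factor filling the slot $j=r$. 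Tracking the $t$-powers is routine: the factor from the base case contributes $t^{n-k-r+1}$, matching the telescoping sum on the right.

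The main obstacle is the extensive index bookkeeping: at every step I must verify that the shifted indices remain in the valid range $1 \le i \le n-2$ so that all applied commutation relations are legitimate. The hypothesis $n > k+r$ is precisely what guarantees this: the largest index reached in the cyclic-shift step, $k+r-1$, is at most $n-2$, and the smallest bottom index reached in the block-shift step, $k+1$, is at least $2$ whenever $k\ge 1$ (the $k=0$ case is still handled because the relevant $T_j^{-1}\pi_n = \pi_n T_{j-1}^{-1}$ step is only invoked for $j\ge 2$, and edge-case empty products cause no problems since they are the identity).
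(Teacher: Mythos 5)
Your proposal is correct and follows essentially the same route as the paper: induction on $r$ with the same base case $t^nY_{k+1}^{(n)} = t^{n-k}T_k\cdots T_1\pi_n T_{n-1}^{-1}\cdots T_{k+1}^{-1}$, the same cancellation of the seam $(T_{n-r}^{-1}\cdots T_{k+1}^{-1})\cdots(T_{n-1}^{-1}\cdots T_{k+r}^{-1})T_{k+r}\cdots T_1$ to extrude $(T_k\cdots T_1)$, and the same use of $\pi_n T_i = T_{i+1}\pi_n$ to merge the stray $\pi_n$ into $\pi_n^{r}$. The only differences are cosmetic (indexing the step as $r-1\to r$ rather than $r\to r+1$, and spelling out the base case that the paper merely asserts).
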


\begin{proof}
We will show this result by induction. For $r=1$ we see that 
$$t^nY_{k+1}^{(n)} = t^{n-k}T_{k}\cdots T_1 \pi_n T_{n-1}^{-1}\cdots T_{k+1}^{-1}.$$ Now we find 
    \begin{align*}
        &t^{(r+1)n}Y_{k+1}^{(n)}\cdots Y_{k+r+1}^{(n)} \\
        &= t^{rn}Y_{k+1}^{(n)} \cdots  Y_{k+r}^{(n)} t^nY_{k+r+1}^{(n)}\\
        &= t^{(n-k)+\ldots + (n-k-r+1)} (T_k\cdots T_1)\cdots (T_{k+r-1}\cdots T_r) \pi_n^{r} (T_{n-r}^{-1}\cdots T_{k+1}^{-1})\cdots (T_{n-1}^{-1}\cdots T_{k+r}^{-1})t^nY_{k+r+1}^{(n)}\\
        &= t^{(n-k)+\ldots + (n-k-r+1)} (T_k\cdots T_1)\cdots (T_{k+r-1}\cdots T_r) \pi_n^{r} (T_{n-r}^{-1}\cdots T_{k+1}^{-1})\cdots (T_{n-1}^{-1}\cdots T_{k+r}^{-1})\left(t^{n-k-r}T_{k+r}\cdots T_1 \pi_n T_{n-1}^{-1}\cdots T_{k+r+1}^{-1} \right)\\
        &= t^{(n-k)+\ldots+ (n-k-r)} (T_k\cdots T_1)\cdots (T_{k+r-1}\cdots T_r) \pi_n^{r} (T_{n-r}^{-1}\cdots T_{k+1}^{-1})\cdots (T_{n-1}^{-1}\cdots T_{k+r}^{-1})T_{k+r}\cdots T_1 \pi_n T_{n-1}^{-1}\cdots T_{k+r+1}^{-1}.\\
    \end{align*}
    Looking closer we see 
    \begin{align*}
        &(T_{n-r}^{-1}\cdots T_{k+1}^{-1})\cdots (T_{n-1}^{-1}\cdots T_{k+r}^{-1})T_{k+r}\cdots T_1 \\
        &= (T_{n-r}^{-1}\cdots T_{k+1}^{-1})\cdots (T_{n-1}^{-1}\cdots T_{k+r+1}^{-1})T_{k+r-1}\cdots T_1 \\
        &= (T_{n-r}^{-1}\cdots T_{k+1}^{-1})\cdots (T_{n-2}^{-1}\cdots T_{k+r-1}^{-1})T_{k+r-1}\cdots T_1 (T_{n-1}^{-1}\cdots  T_{k+r+1}^{-1}) \\
        &= \ldots \\
        &= (T_k\cdots T_1)(T_{n-r}^{-1}\cdots T_{k+2}^{-1})\cdots (T_{n-1}^{-1}\cdots T_{k+r+1}^{-1}).\\
    \end{align*}
Therefore, 
    \begin{align*}
        & (T_k\cdots T_1)\cdots (T_{k+r-1}\cdots T_r) \pi_n^{r} (T_{n-r}^{-1}\cdots T_{k+1}^{-1})\cdots (T_{n-1}^{-1}\cdots T_{k+r}^{-1})T_{k+r}\cdots T_1 \pi_n T_{n-1}^{-1}\cdots T_{k+r+1}^{-1} \\
        &= (T_k\cdots T_1)\cdots (T_{k+r-1}\cdots T_r) \pi_n^{r} (T_k\cdots T_1)(T_{n-r}^{-1}\cdots T_{k+2}^{-1})\cdots (T_{n-1}^{-1}\cdots T_{k+r+1}^{-1}) \pi_n T_{n-1}^{-1}\cdots T_{k+r+1}^{-1}\\
        &= (T_k\cdots T_1)\cdots (T_{k+r-1}\cdots T_r)(T_{k+r}\cdots T_{r+1}) \pi_n^{r} (T_{n-r}^{-1}\cdots T_{k+2}^{-1})\cdots (T_{n-1}^{-1}\cdots T_{k+r+1}^{-1}) \pi_n T_{n-1}^{-1}\cdots T_{k+r+1}^{-1} \\
        &= (T_k\cdots T_1)\cdots (T_{k+r-1}\cdots T_r)(T_{k+r}\cdots T_{r+1}) \pi_n^{r+1} (T_{n-r-1}^{-1}\cdots T_{k+1}^{-1})\cdots (T_{n-2}^{-1}\cdots T_{k+r}^{-1}) (T_{n-1}^{-1}\cdots T_{k+r+1}^{-1}) \\
    \end{align*}
    so that 
    \begin{align*}
        &t^{(r+1)n}Y_{k+1}^{(n)}\cdots Y_{k+r+1}^{(n)} \\
        &= t^{(n-k)+\ldots+ (n-k-r)}(T_k\cdots T_1)\cdots (T_{k+r}\cdots T_{r+1}) \pi_n^{r+1} (T_{n-r-1}^{-1}\cdots T_{k+1}^{-1})\cdots (T_{n-1}^{-1}\cdots T_{k+r+1}^{-1}).\\
    \end{align*}
\end{proof}

We need the following standard result. 

\begin{lem}\label{t-series computation}
    $$\sum_{\sigma \in \mathfrak{S}_{(1^k,n-k)}/\mathfrak{S}_{(1^k,r,n-k-r)}} t^{{n-k \choose 2} - {n-k-r \choose 2} - {r \choose 2}-\ell(\sigma)} = \frac{[n-k]_{t}!}{[n-k-r]_{t}![r]_t!}$$
\end{lem}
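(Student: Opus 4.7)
The plan is to reduce the statement to a classical Gaussian binomial identity. First, I would observe that since every element of $\mathfrak{S}_{(1^k,n-k)}$ fixes the first $k$ letters, the restriction map identifies $\mathfrak{S}_{(1^k,n-k)}/\mathfrak{S}_{(1^k,r,n-k-r)}$ with the set of minimal-length coset representatives of $\mathfrak{S}_{n-k}/(\mathfrak{S}_r \times \mathfrak{S}_{n-k-r})$, with length preserved. So the sum becomes
$$\text{LHS} = t^{\binom{n-k}{2}-\binom{n-k-r}{2}-\binom{r}{2}}\sum_{\sigma \in \mathfrak{S}_{n-k}/(\mathfrak{S}_r\times\mathfrak{S}_{n-k-r})} t^{-\ell(\sigma)}.$$

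Next I would use the standard identity $\binom{n-k}{2} = \binom{r}{2}+\binom{n-k-r}{2}+r(n-k-r)$, which rewrites the prefactor as $t^{r(n-k-r)}$. Since $r(n-k-r)$ is precisely the maximal length of a minimal coset representative (the length of the long element $w_0^{\mathfrak{S}_{n-k}}\cdot w_0^{\mathfrak{S}_r\times\mathfrak{S}_{n-k-r}}$), I would invoke the classical palindromic symmetry of the Gaussian binomial coefficient: the involution $\sigma \mapsto w_0^{\mathfrak{S}_{n-k}}\sigma\, w_0^{\mathfrak{S}_r\times\mathfrak{S}_{n-k-r}}$ (restricted to minimal coset representatives, then reselecting the minimal representative of the resulting coset) sends an element of length $\ell(\sigma)$ to one of length $r(n-k-r)-\ell(\sigma)$. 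This converts $\sum_\sigma t^{r(n-k-r)-\ell(\sigma)}$ into $\sum_\sigma t^{\ell(\sigma)}$.

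Finally I would cite the well-known Gaussian binomial identity
$$\sum_{\sigma \in \mathfrak{S}_{n-k}/(\mathfrak{S}_r\times\mathfrak{S}_{n-k-r})} t^{\ell(\sigma)} = \binom{n-k}{r}_t = \frac{[n-k]_t!}{[r]_t!\,[n-k-r]_t!},$$
which is the desired right-hand side. This identity is standard and can itself be proved either by induction on $n-k$ using the recursion $\binom{n}{r}_t = \binom{n-1}{r-1}_t + t^r \binom{n-1}{r}_t$, or by the bijection sending a coset representative to the increasing subsequence $(\sigma(k+1)<\ldots<\sigma(k+r))$ and computing the inversion statistic in terms of that subset.

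The only real obstacle is being careful about the palindromic symmetry step: one must confirm that the involution on minimal coset representatives genuinely swaps $\ell(\sigma)$ with $r(n-k-r)-\ell(\sigma)$. An alternative, entirely computational route is to avoid palindromic symmetry altogether and instead prove the identity directly by induction on $r$, using the same Pascal-style recursion for $t$-binomials; this trades elegance for a routine but unambiguous check.
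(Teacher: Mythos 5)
Your proof is correct, but it takes a genuinely different route from the paper's. The paper never invokes the Gaussian binomial coefficient or palindromic symmetry: it expands $[n-k]_t!$ as the reversed Poincar\'e sum $\sum_{w\in\mathfrak{S}_{(1^k,n-k)}}t^{\binom{n-k}{2}-\ell(w)}$, writes each $w$ uniquely as $\sigma\gamma$ with $\sigma$ a minimal coset representative and $\gamma\in\mathfrak{S}_{(1^k,r,n-k-r)}$, uses the length-additivity $\ell(\sigma\gamma)=\ell(\sigma)+\ell(\gamma)$ to factor the double sum, and recognizes the inner factor as $[n-k-r]_t!\,[r]_t!$; dividing through gives the lemma in a few lines. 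You instead normalize the exponent via $\binom{n-k}{2}-\binom{n-k-r}{2}-\binom{r}{2}=r(n-k-r)$ (which is correct), flip the statistic $\ell(\sigma)\mapsto r(n-k-r)-\ell(\sigma)$ using the involution $\sigma\mapsto w_0\sigma w_{0,J}$, and then quote the classical identity $\sum_\sigma t^{\ell(\sigma)}=\frac{[n-k]_t!}{[r]_t!\,[n-k-r]_t!}$. Both arguments ultimately rest on the same parabolic coset decomposition; yours requires the additional check that the involution preserves minimal coset representatives and reverses length, which does go through --- indeed $\ell(w_0\sigma w_{0,J})=\ell(w_0)-\ell(\sigma)-\ell(w_{0,J})$ and $w_0\sigma w_{0,J}$ has no right descent in $J$, so it is already minimal and your ``reselecting the minimal representative'' caveat is unnecessary. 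The paper's factorization sidesteps this step entirely, which is why it is shorter and more self-contained; your version has the compensating merit of identifying the sum explicitly as a $t$-binomial coefficient, and your fallback induction on $r$ via the Pascal-type recursion would also work.
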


\begin{proof}
    We see the following:
    \begin{align*}
        & [n-k]_{t}! \\
        &= \sum_{\sigma \in \mathfrak{S}_{(1^k,n-k)}} t^{{n-k \choose 2}- \ell(\sigma)}\\
        &= \sum_{\sigma \in \mathfrak{S}_{(1^k,n-k)}/\mathfrak{S}_{(1^k,r,n-k-r)}} \sum_{\gamma \in \mathfrak{S}_{(1^k,r,n-k-r)}} t^{{n-k \choose 2}- \ell(\sigma \gamma)}\\
        &= \sum_{\sigma \in \mathfrak{S}_{(1^k,n-k)}/\mathfrak{S}_{(1^k,r,n-k-r)}} \sum_{\gamma \in \mathfrak{S}_{(1^k,r,n-k-r)}} t^{{n-k \choose 2}- \ell(\sigma) - \ell( \gamma)}\\
        &= \sum_{\sigma \in \mathfrak{S}_{(1^k,n-k)}/\mathfrak{S}_{(1^k,r,n-k-r)}} t^{{n-k \choose 2}- {n-k-r \choose 2} - {r \choose 2}- \ell(\sigma)}  \sum_{\gamma \in \mathfrak{S}_{(1^k,r,n-k-r)}} t^{{n-k-r \choose 2} + {r \choose 2} - \ell( \gamma)}\\
        &= [n-k-r]_{t}![r]_{t}! \sum_{\sigma \in \mathfrak{S}_{(1^k,n-k)}/\mathfrak{S}_{(1^k,r,n-k-r)}} t^{{n-k \choose 2}- {n-k-r \choose 2} - {r \choose 2}- \ell(\sigma)}.\\
    \end{align*}
\end{proof}

Using the prior lemmas in this section now shows the following:

\begin{lem}\label{important lemma}
$$e_r\left[t^nY_{k+1}^{(n)}+\ldots + t^nY_n^{(n)}\right] \epsilon_k^{(n)} = t^{{r +1 \choose 2}} \left(\frac{1-t^{n-k-r+1}}{1-t} \right) \cdots \left(\frac{1-t^{n-k}}{1-t^r} \right) \epsilon_k^{(n)}(T_k\cdots T_1)(T_{k+1}\cdots T_2)\cdots (T_{k+r-1}\cdots T_r) \pi_n^r \epsilon_k^{(n)}.$$
\end{lem}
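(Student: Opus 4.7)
The plan is to apply the three preceding lemmas in succession while tracking the resulting scalar. I first record two basic facts about the idempotent $\epsilon_k^{(n)}$. Because $(T_i-1)(T_i+t)=0$ and $\epsilon_k^{(n)}$ is the $t$-symmetrizer for $\mathfrak{S}_{(1^k,n-k)}$, one has $T_i\epsilon_k^{(n)}=\epsilon_k^{(n)}=\epsilon_k^{(n)}T_i$, and therefore also $T_i^{-1}\epsilon_k^{(n)}=\epsilon_k^{(n)}$, for every $k+1\le i\le n-1$; iterating along reduced words yields $T_w\epsilon_k^{(n)}=\epsilon_k^{(n)}=\epsilon_k^{(n)}T_w$ for every $w\in\mathfrak{S}_{(1^k,n-k)}$. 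Second, by the standard affine Hecke identity any symmetric polynomial in $Y_{k+1}^{(n)},\dots,Y_n^{(n)}$ commutes with $T_{k+1},\dots,T_{n-1}$; in particular $e_r\bigl[t^nY_{k+1}^{(n)}+\cdots+t^nY_n^{(n)}\bigr]$ commutes with $\epsilon_k^{(n)}$.

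Combining these with idempotency, write $e_r[\cdots]\epsilon_k^{(n)}=\epsilon_k^{(n)}\,e_r[\cdots]\,\epsilon_k^{(n)}$ and expand $e_r[\cdots]$ via Lemma \ref{expansion of partially elementary symmetric Cherednik operators lemma}: the outer $T_\sigma$ is absorbed by the left $\epsilon_k^{(n)}$ and $T_{\sigma^{-1}}$ by the right $\epsilon_k^{(n)}$, yielding
$$e_r[\cdots]\epsilon_k^{(n)}=\Bigl(\sum_{\sigma\in\mathfrak{S}_{(1^k,n-k)}/\mathfrak{S}_{(1^k,r,n-k-r)}} t^{-\ell(\sigma)}\Bigr)\,\epsilon_k^{(n)}\cdot t^{rn}Y_{k+1}^{(n)}\cdots Y_{k+r}^{(n)}\cdot\epsilon_k^{(n)}.$$
Next, substitute Lemma \ref{expansion of product of Cherednik operators} for the product of Cherednik elements. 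The trailing braid factor $(T_{n-r}^{-1}\cdots T_{k+1}^{-1})\cdots(T_{n-1}^{-1}\cdots T_{k+r}^{-1})$ is a product of $T_i^{-1}$ with $k+1\le i\le n-1$, so it is absorbed into the right $\epsilon_k^{(n)}$, leaving $\epsilon_k^{(n)}(T_k\cdots T_1)(T_{k+1}\cdots T_2)\cdots(T_{k+r-1}\cdots T_r)\pi_n^{r}\epsilon_k^{(n)}$ multiplied by the scalar $t^{r(n-k)-\binom{r}{2}}\sum_\sigma t^{-\ell(\sigma)}$.

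The remaining task, and the only real obstacle, is the bookkeeping of the scalar. Lemma \ref{t-series computation} gives
$$\sum_\sigma t^{-\ell(\sigma)}=t^{-\binom{n-k}{2}+\binom{n-k-r}{2}+\binom{r}{2}}\,\frac{[n-k]_t!}{[n-k-r]_t!\,[r]_t!}.$$
Using $\binom{n-k}{2}-\binom{n-k-r}{2}=r(n-k)-\binom{r+1}{2}$ together with $\binom{r+1}{2}+\binom{r}{2}=r^{2}$, the total exponent reduces as
$$r(n-k)-\tbinom{r}{2}-\tbinom{n-k}{2}+\tbinom{n-k-r}{2}+\tbinom{r}{2}=\tbinom{r+1}{2}.$$
Finally, rewriting the $t$-binomial as the telescoping product $\prod_{j=1}^{r}\frac{1-t^{n-k-r+j}}{1-t^{j}}=\bigl(\tfrac{1-t^{n-k-r+1}}{1-t}\bigr)\cdots\bigl(\tfrac{1-t^{n-k}}{1-t^{r}}\bigr)$ recovers exactly the coefficient stated in the lemma.
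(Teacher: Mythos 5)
Your proposal is correct and follows essentially the same route as the paper: symmetrize using idempotency of $\epsilon_k^{(n)}$, expand via Lemma \ref{expansion of partially elementary symmetric Cherednik operators lemma}, absorb the $T_\sigma$, $T_{\sigma^{-1}}$, and trailing inverse braid factors into the idempotents, substitute Lemma \ref{expansion of product of Cherednik operators}, and evaluate the scalar with Lemma \ref{t-series computation}. Your exponent bookkeeping matches the paper's (the identity $\binom{r+1}{2}+\binom{r}{2}=r^2$ you cite is not actually needed, but it is harmless), and making explicit the absorption property $T_w\epsilon_k^{(n)}=\epsilon_k^{(n)}=\epsilon_k^{(n)}T_w$ for $w\in\mathfrak{S}_{(1^k,n-k)}$ is a welcome clarification of a step the paper leaves implicit.
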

\begin{proof}
Putting together Lemmas \ref{expansion of partially elementary symmetric Cherednik operators lemma}, \ref{expansion of product of Cherednik operators}, and \ref{t-series computation} we get the following computation:
\begin{align*}
    &e_r\left[t^nY_{k+1}^{(n)}+\ldots + t^nY_n^{(n)}\right] \epsilon_k^{(n)}\\ 
    &= e_r\left[t^nY_{k+1}^{(n)}+\ldots + t^nY_n^{(n)}\right] (\epsilon_k^{(n)})^2\\
    &= \epsilon_k^{(n)}e_r\left[t^nY_{k+1}^{(n)}+\ldots + t^nY_n^{(n)}\right] \epsilon_k^{(n)}\\
    &= \sum_{\sigma \in \mathfrak{S}_{(1^k,n-k)}/\mathfrak{S}_{(1^k,r,n-k-r)}} t^{-\ell(\sigma)} \epsilon_k^{(n)} T_{\sigma} t^{rn}Y_{k+1}^{(n)} \cdots  Y_{k+r}^{(n)} T_{\sigma^{-1}} \epsilon_k^{(n)} \\
    &= \sum_{\sigma \in \mathfrak{S}_{(1^k,n-k)}/\mathfrak{S}_{(1^k,r,n-k-r)}} t^{-\ell(\sigma)} \epsilon_k^{(n)}  t^{rn}Y_{k+1}^{(n)} \cdots  Y_{k+r}^{(n)}  \epsilon_k^{(n)} \\
    &= \left(\sum_{\sigma \in \mathfrak{S}_{(1^k,n-k)}/\mathfrak{S}_{(1^k,r,n-k-r)}} t^{-\ell(\sigma)} \right) \epsilon_k^{(n)}   t^{(n-k)+\ldots + (n-k-r+1)} (T_k\cdots T_1)(T_{k+1}\cdots T_2)\cdots (T_{k+r-1}\cdots T_r) \pi_n^{r} \\
    & ~~~ \times (T_{n-r}^{-1}\cdots T_{k+1}^{-1})\cdots (T_{n-1}^{-1}\cdots T_{k+r}^{-1})  \epsilon_k^{(n)} \\
    &= \left(\sum_{\sigma \in \mathfrak{S}_{(1^k,n-k)}/\mathfrak{S}_{(1^k,r,n-k-r)}} t^{-\ell(\sigma)} \right) \epsilon_k^{(n)}   t^{(n-k)+\ldots + (n-k-r+1)} (T_k\cdots T_1)(T_{k+1}\cdots T_2)\cdots (T_{k+r-1}\cdots T_r) \pi_n^{r} \epsilon_k^{(n)} \\
    &= t^{{r \choose 2} +r}\left(\sum_{\sigma \in \mathfrak{S}_{(1^k,n-k)}/\mathfrak{S}_{(1^k,r,n-k-r)}} t^{{n-k \choose 2} - {n-k-r \choose 2} - {r \choose 2}-\ell(\sigma)} \right) \epsilon_k^{(n)}    (T_k\cdots T_1)(T_{k+1}\cdots T_2)\cdots (T_{k+r-1}\cdots T_r) \pi_n^{r} \epsilon_k^{(n)} \\
    &= t^{{r +1\choose 2}} \frac{[n-k]_{t}!}{[n-k-r]_{t}![r]_t!}\epsilon_k^{(n)}    (T_k\cdots T_1)(T_{k+1}\cdots T_2)\cdots (T_{k+r-1}\cdots T_r) \pi_n^{r} \epsilon_k^{(n)} \\
    &= t^{{r +1\choose 2}} \left(\frac{1-t^{n-k-r+1}}{1-t} \right) \cdots \left(\frac{1-t^{n-k}}{1-t^r} \right) \epsilon_k^{(n)}    (T_k\cdots T_1)(T_{k+1}\cdots T_2)\cdots (T_{k+r-1}\cdots T_r) \pi_n^{r} \epsilon_k^{(n)}.\\
\end{align*}

\end{proof}

\begin{remark}
Note that Lemma \ref{important lemma} shows that in any $\sH_{n}^{+}$ module V, the operators $e_r\left[t^nY_{k+1}^{(n)}+\ldots + t^nY_n^{(n)}\right]$ preserve the subspace $\epsilon_k^{(n)}V.$ 
\end{remark}

The next result will be important for the proof of Theorem \ref{convergence for elementary sym} where we will need to argue that the operator $e_r\left[t^nY_{k+1}^{(n)}+\ldots + t^nY_n^{(n)}\right]$ preserves the space $x_1\ldots x_k \mathbb{Q}(q,t)[x_1,\ldots, x_n]^{\mathfrak{S}_{(1^k,n-k)}}$ in the polynomial representation of $\sH_{n}^{+}.$

\begin{lem}\label{commutation with product of x's}
\begin{align*}
    &e_r\left[t^nY_{k+1}^{(n)}+\ldots + t^nY_n^{(n)}\right] \epsilon_k^{(n)}X_1\cdots X_k\\
    &= X_1\cdots X_k t^{rk+ {r +1\choose 2}} \left(\frac{1-t^{n-k-r+1}}{1-t} \right) \cdots \left(\frac{1-t^{n-k}}{1-t^r} \right)\epsilon_k^{(n)} (T_k^{-1}\cdots T_1^{-1})(T_{k+1}^{-1}\cdots T_2^{-1})\cdots (T_{k+r-1}^{-1}\cdots T_r^{-1}) \pi_n^{r} \epsilon_k^{(n)}\\
\end{align*}
\end{lem}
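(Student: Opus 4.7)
The plan is to begin with Lemma \ref{important lemma}, multiply both sides on the right by $X_1\cdots X_k$, and then transport $X_1\cdots X_k$ leftward through the remaining factors using standard DAHA commutation relations. Equivalently, after cancelling the common scalar from Lemma \ref{important lemma}, one must show
$$\epsilon_k^{(n)}(T_k\cdots T_1)\cdots(T_{k+r-1}\cdots T_r)\pi_n^r\epsilon_k^{(n)} X_1\cdots X_k = t^{rk} X_1\cdots X_k \epsilon_k^{(n)}(T_k^{-1}\cdots T_1^{-1})\cdots(T_{k+r-1}^{-1}\cdots T_r^{-1})\pi_n^r \epsilon_k^{(n)}.$$

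The two easy commutations come first. Since every $X_i$ with $i\leq k$ commutes with each $T_j$ for $j\geq k+1$, the product $X_1\cdots X_k$ commutes with $\epsilon_k^{(n)}$, and can be slid past both copies. Iterating $\pi_n X_i = X_{i+1}\pi_n$ for $1\leq i \leq n-1$ (which applies throughout because $r+k < n$), an easy induction gives $\pi_n^r X_1\cdots X_k = X_{r+1}X_{r+2}\cdots X_{r+k}\pi_n^r$.

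The heart of the proof is to move $X_{r+1}\cdots X_{r+k}$ to the left of the braid word $A := (T_k\cdots T_1)(T_{k+1}\cdots T_2)\cdots(T_{k+r-1}\cdots T_r)$. I plan to establish the sub-lemma
$$T_{a+b-1}T_{a+b-2}\cdots T_a \cdot X_{a+1}X_{a+2}\cdots X_{a+b} = t^b X_a X_{a+1}\cdots X_{a+b-1}\cdot T_{a+b-1}^{-1}T_{a+b-2}^{-1}\cdots T_a^{-1}$$
by induction on $b$, with base case $T_a X_{a+1} = tX_a T_a^{-1}$ (a direct consequence of $T_i^{-1}X_iT_i^{-1} = t^{-1}X_{i+1}$) and inductive step relying on the commutations $T_j X_i = X_i T_j$ for $i\notin\{j,j+1\}$. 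Applying this sub-lemma row by row to $A$, starting from the rightmost row with parameter $(a,b) = (r,k)$ and decreasing $a$ by one per iteration down to $(1,k)$, each row contributes a factor of $t^k$ and converts its $T$-factors into $T^{-1}$-factors appended to the right of the accumulating inverse braid word, producing
$$A\cdot X_{r+1}\cdots X_{r+k} = t^{rk} X_1 X_2\cdots X_k\, (T_k^{-1}\cdots T_1^{-1})(T_{k+1}^{-1}\cdots T_2^{-1})\cdots(T_{k+r-1}^{-1}\cdots T_r^{-1}).$$

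Combining these ingredients with Lemma \ref{important lemma} yields the claimed identity, the total scalar being $t^{rk+\binom{r+1}{2}}\prod_{j=1}^r \frac{1-t^{n-k-r+j}}{1-t^j}$. The main obstacle is the bookkeeping in the sub-lemma's induction: after each row one must verify that the freshly appended $T^{-1}$-factors commute with the $X$-factors still waiting to be pushed through. This holds because the appended $T^{-1}$'s involve indices strictly below the index of the next $X$ being moved, so the index restriction in the Demazure-Lusztig relations is satisfied throughout.
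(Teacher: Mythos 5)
Your proposal is correct and follows essentially the same route as the paper: commute $X_1\cdots X_k$ past $\epsilon_k^{(n)}$, use $\pi_n^r X_1\cdots X_k = X_{r+1}\cdots X_{r+k}\pi_n^r$, and then push the $X$'s through the braid word one row at a time via the identity $T_aX_{a+1} = tX_aT_a^{-1}$, which is exactly the paper's computation. The only quibbles are cosmetic: each new block of inverses is prepended to the \emph{left} of the accumulated inverse word (not appended to the right), and your final worry is moot since the $X$'s sit to the left of the freshly created $T^{-1}$-factors and never need to pass through them.
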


\begin{proof}
    \begin{align*}
        &e_r\left[t^nY_{k+1}^{(n)}+\ldots + t^nY_n^{(n)}\right] \epsilon_k^{(n)}X_1\cdots X_k\\
        &=t^{{r +1\choose 2}} \left(\frac{1-t^{n-k-r+1}}{1-t} \right) \cdots \left(\frac{1-t^{n-k}}{1-t^r} \right) \epsilon_k^{(n)}    (T_k\cdots T_1)(T_{k+1}\cdots T_2)\cdots (T_{k+r-1}\cdots T_r) \pi_n^{r} \epsilon_k^{(n)}X_1\cdots X_k \\
        &=t^{{r +1\choose 2}} \left(\frac{1-t^{n-k-r+1}}{1-t} \right) \cdots \left(\frac{1-t^{n-k}}{1-t^r} \right) \epsilon_k^{(n)}    (T_k\cdots T_1)(T_{k+1}\cdots T_2)\cdots (T_{k+r-1}\cdots T_r) \pi_n^{r}X_1\cdots X_k \epsilon_k^{(n)} \\
        &=t^{{r +1\choose 2}} \left(\frac{1-t^{n-k-r+1}}{1-t} \right) \cdots \left(\frac{1-t^{n-k}}{1-t^r} \right) \epsilon_k^{(n)}    (T_k\cdots T_1)(T_{k+1}\cdots T_2)\cdots (T_{k+r-1}\cdots T_r) X_{r+1}\cdots X_{k+r}\pi_n^{r} \epsilon_k^{(n)}.\\
    \end{align*}

    Further, 
    \begin{align*}
        &T_{k+r-1}\cdots T_r X_{r+1}\cdots X_{r+k} \\
        &= T_{k+r-1}\cdots T_{r+1}(T_rX_{r+1}) X_{r+2}\cdots X_{r+k} \\
        &= T_{k+r-1}\cdots T_{r+1}(tX_rT_r^{-1})X_{r+2}\cdots X_{r+k} \\
        &= tT_{k+r-1}\cdots T_{r+1}X_r X_{r+2}\cdots X_{r+k} T_r^{-1}\\
        &= tX_rT_{k+r-1}\cdots T_{r+1}X_{r+2}\cdots X_{r+k} T_r^{-1}\\
        &= tX_rT_{k+r-1}\cdots T_{r+2}tX_{r+1}T_{r+1}^{-1}X_{r+3}\cdots X_{r+k} T_r^{-1}\\
        &= t^2 X_rX_{r+1}T_{k+r-1}\cdots T_{r+2} X_{r+3}\cdots X_{r+k}T_{r+1}^{-1}T_r^{-1} \\
        &= \ldots \\
        &= t^k X_r\cdots X_{r+k-1}T_{k+r-1}^{-1}\cdots T_r^{-1}.\\
    \end{align*}
    By applying this argument repeatedly we find that 
    \begin{align*}
        &(T_k\cdots T_1)\cdots (T_{k+r-2}\cdots T_{r-1})(T_{k+r-1}\cdots T_r) X_{r+1}\cdots X_{k+r} \\
        &= t^k (T_k\cdots T_1)\cdots (T_{k+r-2}\cdots T_{r-1})X_{r}\cdots X_{k+r-1} (T_{k+r-1}^{-1}\cdots T_r^{-1}) \\
        &= \ldots \\
        &= t^{rk} X_1\cdots X_k (T_k^{-1}\cdots T_1^{-1})\cdots (T_{k+r-1}^{-1}\cdots T_r^{-1}) \\
    \end{align*}
    so therefore, 
    \begin{align*}
        &t^{{r +1\choose 2}} \left(\frac{1-t^{n-k-r+1}}{1-t} \right) \cdots \left(\frac{1-t^{n-k}}{1-t^r} \right) \epsilon_k^{(n)} (T_k\cdots T_1)(T_{k+1}\cdots T_2)\cdots (T_{k+r-1}\cdots T_r) X_{r+1}\cdots X_{k+r}\pi_n^{r} \epsilon_k^{(n)} \\
        &= t^{{r +1\choose 2}} \left(\frac{1-t^{n-k-r+1}}{1-t} \right) \cdots \left(\frac{1-t^{n-k}}{1-t^r} \right) \epsilon_k^{(n)} t^{rk} X_1\cdots X_k (T_k^{-1}\cdots T_1^{-1})\cdots (T_{k+r-1}^{-1}\cdots T_r^{-1})\pi_n^{r} \epsilon_k^{(n)}\\
        &= X_1\cdots X_k t^{rk+ {r +1\choose 2}} \left(\frac{1-t^{n-k-r+1}}{1-t} \right) \cdots \left(\frac{1-t^{n-k}}{1-t^r} \right) \epsilon_k^{(n)}  (T_k^{-1}\cdots T_1^{-1})\cdots (T_{k+r-1}^{-1}\cdots T_r^{-1})\pi_n^{r} \epsilon_k^{(n)}.\\
    \end{align*}
\end{proof}

Lastly, we have the standard coproduct formula for the elementary symmetric functions.

\begin{lem}\label{coproduct lemma}
    $e_r[X+Y] =  \sum_{s =0}^{r} e_{s}[X]e_{r-s}[Y].$
\end{lem}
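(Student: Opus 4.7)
The plan is to prove this via the standard generating function argument, which is the cleanest route for identities of this shape. Introduce the formal generating series
\[
E(z;Z) := \sum_{r \geq 0} e_r[Z]\, z^r,
\]
where for $Z = z_1 + z_2 + \ldots$ one has the product expansion $E(z;Z) = \prod_{i \geq 1}(1 + z\, z_i)$. This is immediate from the definition of $e_r[Z]$ as the sum of all squarefree degree-$r$ monomials in the $z_i$.

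Next, I would specialize $Z = X + Y$, where $X = x_1 + x_2 + \ldots$ and $Y = y_1 + y_2 + \ldots$ are treated as formal infinite sums of independent variables. Plethystic evaluation then yields
\[
E(z; X+Y) \;=\; \prod_{i \geq 1}(1+z x_i)\cdot \prod_{j \geq 1}(1+z y_j) \;=\; E(z;X)\cdot E(z;Y).
\]
Expanding the right-hand side as a product of power series in $z$ and comparing coefficients of $z^r$ on both sides gives exactly
\[
e_r[X+Y] \;=\; \sum_{s=0}^{r} e_s[X]\, e_{r-s}[Y],
\]
as claimed.

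There is no real obstacle here: the identity is classical, and the only subtlety worth noting is that it is a plethystic identity in the sense of the paper's notation, so $X+Y$ denotes the formal sum of alphabets and $e_r[X+Y]$ is computed by evaluating the elementary symmetric function at the merged variable set $\{x_i\} \cup \{y_j\}$. Alternatively, one can prove the identity combinatorially by partitioning each $r$-element subset of $\{x_i\}\cup\{y_j\}$ according to how many of its elements lie in $\{x_i\}$, which gives the same sum on the right-hand side directly from the monomial definition of $e_r$.
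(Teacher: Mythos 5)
Your proof is correct. The paper itself offers no argument here --- its proof is the single line ``This result is standard'' --- so there is nothing to diverge from; your generating-function argument via $E(z;Z) = \prod_i(1+z\,z_i)$ and the factorization $E(z;X+Y) = E(z;X)E(z;Y)$ is precisely the canonical proof of this classical identity, and your remark that $e_r[X+Y]$ is to be read plethystically as the elementary symmetric function of the merged alphabet is the right (and only) point of care. Either of your two routes (comparing coefficients of $z^r$, or partitioning $r$-element subsets of $\{x_i\}\cup\{y_j\}$ by how many elements come from each alphabet) is complete and would serve as a valid substitute for the paper's citation to folklore.
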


\begin{proof}
    This result is standard.
\end{proof}

\subsection{Proof of Convergence}
We will now use all of the lemmas proven above to show the following result:
\begin{prop}\label{convergence for elementary sym}
    For $ r \geq 1$ the sequence of operators $(\Psi_{e_r}^{(n)})_{n \geq 1}$ converges to an operator $e_r[\Delta]$ on $\sP_{as}^{+}.$ The operators $e_r[\Delta]$ satisfy the following properties: 
    \begin{itemize}
        \item $e_r[\Delta](\widetilde{E}_{(\mu|\lambda)}) = e_r[\kappa_{\sort(\mu*\lambda)}(q,t)]\widetilde{E}_{(\mu|\lambda)}$
        \item $[e_r[\Delta], \y_i ] = 0$
        \item $[e_r[\Delta], T_i] = 0$
        \item $[e_r[\Delta],e_s[\Delta]] = 0$
        \item $$e_r[\Delta]|_{x_1\cdots x_k\sP(k)^{+}} = \sum_{s = 0}^{r} \left[ \prod_{i=1}^{s}\left( \frac{t^i}{1-t^i}\right) \right] e_{r-s}(\y_1, \ldots, \y_k) \epsilon_k (T_k\cdots T_1)(T_{k+1}\cdots T_2)\cdots (T_{k+s-1}\cdots T_s) \pi^s.$$
    \end{itemize}
\end{prop}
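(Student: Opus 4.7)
The plan is to prove convergence on each subspace $x_1\cdots x_k\sP(k)^+$ separately (which, since $\sP_{as}^+ = \bigcup_k x_1\cdots x_k\sP(k)^+$, suffices), using the coproduct formula (Lemma \ref{coproduct lemma}) to decompose
$$e_r[t^n(Y_1^{(n)}+\cdots+Y_n^{(n)})] = \sum_{s=0}^{r} e_{r-s}[t^n(Y_1^{(n)}+\cdots+Y_k^{(n)})] \cdot e_s[t^n(Y_{k+1}^{(n)}+\cdots+Y_n^{(n)})]$$
and then to analyze each factor separately. For the second factor, the key observation is that any $f \in x_1\cdots x_k\sP(k)^+$, after restriction to $\sP_n^+$, is $\mathfrak{S}_{(1^k,n-k)}$-symmetric and hence fixed by $\epsilon_k^{(n)}$. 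Lemma \ref{important lemma} therefore rewrites the second factor as $t^{\binom{s+1}{2}}\prod_{i=1}^{s}\frac{1-t^{n-k-s+i}}{1-t^i}\,\epsilon_k^{(n)}(T_k\cdots T_1)\cdots(T_{k+s-1}\cdots T_s)\pi_n^s$, whose scalar converges $t$-adically to $\prod_{i=1}^{s}\frac{t^i}{1-t^i}$ while the operator factors individually have known limits $\epsilon_k, T_i, \pi$. Proposition \ref{product of limits proposition} assembles these into the expected second-factor limit.

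The first factor $e_{r-s}[t^n(Y_1^{(n)}+\cdots+Y_k^{(n)})]$ is more delicate because $\y_j$ is the limit of $\widetilde{Y}_j^{(n)}$, not of $t^n Y_j^{(n)}$, and the two only agree on vectors where the $\rho$ in the definition of $\widetilde{Y}_j^{(n)}$ acts trivially. The crucial point is that on functions carrying a common factor of $x_1$ this is automatic (since the image after $\pi_n T_{n-1}^{-1}\cdots T_j^{-1}$ always contains such a factor), so $t^n Y_j^{(n)} \to \y_j$ for $1\le j\le k$ on $x_1\cdots x_k\sP_{as}^+$. The main technical obstacle will be verifying that the image of the second factor (involving $\pi_n^s$ and the $T$-products) remains in such a subspace so that this identification is still valid. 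This amounts to tracking how $\pi^s$ and the products $T_{k+j-1}\cdots T_j$ interact with the distinguished factor $x_1\cdots x_k$. Once done, the first factor converges to $e_{r-s}(\y_1,\ldots,\y_k)$, and Proposition \ref{product of limits proposition} produces the explicit formula.

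The remaining bullet points follow with comparatively little effort. Commutation with $\y_i$ and $[e_r[\Delta], e_s[\Delta]]=0$ are inherited from finite $n$: all the relevant operators lie in the commutative subalgebra $\mathbb{Q}(q,t)[Y_1^{(n)},\ldots,Y_n^{(n)}]$, so Propositions \ref{limits proposition} and \ref{product of limits proposition} transfer commutation to the limit. Commutation with $T_i$ mirrors the calculation in the remark following Theorem \ref{almost sym macdonald operator}, applied term by term to the explicit formula using the braid relations together with $\pi T_i = T_{i+1}\pi$ and the invariance of $\epsilon_k$ under the appropriate $T$'s. Finally, since $e_r[\Delta]$ commutes with every $\y_i$ and with $\Delta$, whose joint weight spaces are one-dimensional by Theorem \ref{almost sym macdonald operator}, it must act diagonally in the $\widetilde{E}_{(\mu|\lambda)}$ basis; the eigenvalue $e_r[\kappa_{\sort(\mu*\lambda)}(q,t)]$ is identified by taking the $n\to\infty$ limit of the corresponding finite-variable Macdonald eigenvalues $e_r[\alpha^{(n)}_{\mu*\lambda*0^{n-\ell(\mu)-\ell(\lambda)}}]$ on the non-symmetric Macdonald polynomials that converge to $\widetilde{E}_{(\mu|\lambda)}$.
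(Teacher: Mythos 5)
Your plan for the convergence statement and the explicit formula is essentially the paper's own argument: decompose via Lemma \ref{coproduct lemma}, handle the factor $e_s[t^nY_{k+1}^{(n)}+\cdots+t^nY_n^{(n)}]$ with Lemma \ref{important lemma} using $\epsilon_k^{(n)}$-invariance, and convert the factor $e_{r-s}[t^nY_1^{(n)}+\cdots+t^nY_k^{(n)}]$ into deformed Cherednik operators on the subspace divisible by $x_1\cdots x_k$. The step you defer as ``the main technical obstacle'' --- checking that the output of the second factor still carries the factor $x_1\cdots x_k$ so that the substitution $t^nY_j^{(n)}=\widetilde{Y}_j^{(n)}$ remains legitimate --- is precisely the content of Lemma \ref{commutation with product of x's}, which commutes $e_s[t^nY_{k+1}^{(n)}+\cdots+t^nY_n^{(n)}]\epsilon_k^{(n)}$ past $X_1\cdots X_k$ at the cost of a factor $t^{sk}$ and inverted $T$'s; so that part of the plan closes.

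There is, however, a genuine flaw in your treatment of $[e_r[\Delta],\y_i]=0$ and, consequently, of the eigenvalue statement. You justify the commutation by saying that ``all the relevant operators lie in the commutative subalgebra $\mathbb{Q}(q,t)[Y_1^{(n)},\ldots,Y_n^{(n)}]$.'' This is false for $\y_i$: it is the limit of the deformed operators $\widetilde{Y}_i^{(n)}=t^{n-i+1}T_{i-1}\cdots T_1\rho\pi_nT_{n-1}^{-1}\cdots T_i^{-1}$, which involve the projection $\rho$, do not lie in $\mathscr{H}_n$ at all, and (as the paper emphasizes) do not even commute with one another; the identity $t^nY_i^{(n)}=\widetilde{Y}_i^{(n)}$ holds only on the subspace $x_1\cdots x_k\sP_n^{+}$, and transferring the finite-rank commutator $[e_r[t^nY^{(n)}],t^nY_i^{(n)}]=0$ through that identification requires knowing that both operators preserve the relevant subspaces --- which you have not established at this point. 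Since your proof that $e_r[\Delta]$ acts diagonally on the $\widetilde{E}_{(\mu|\lambda)}$ rests on this commutation (via the one-dimensionality of the joint $\Delta,\y$-weight spaces), the argument as structured is circular. The paper runs the logic in the opposite order: it first proves $e_r[\Delta](\widetilde{E}_{(\mu|\lambda)})=e_r[\kappa_{\sort(\mu*\lambda)}(q,t)]\widetilde{E}_{(\mu|\lambda)}$ directly from Definition \ref{stable-limit non-sym MacD function defn} and Proposition \ref{limits proposition}, using only that $e_r[t^nY^{(n)}]$ commutes with $\epsilon_{\ell(\mu)}^{(n)}$ and acts on $E_{\mu*\lambda*0^{n-(\ell(\mu)+\ell(\lambda))}}$ by the finite-variable eigenvalue, and only then deduces $[e_r[\Delta],\y_i]=0$ (and $[e_r[\Delta],e_s[\Delta]]=0$) from simultaneous diagonality on the basis $\widetilde{E}_{(\mu|\lambda)}$ of Theorem \ref{weight basis theorem}. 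You should reorder your argument accordingly; the eigenvalue computation you sketch at the end is exactly the direct argument needed, so no new ideas are required, but the commutation cannot be taken as the starting point. (The claim $[e_r[\Delta],e_s[\Delta]]=0$ by itself is fine via Proposition \ref{product of limits proposition}, since there both factors genuinely are limits of commuting elements of $\mathbb{Q}(q,t)[Y_1^{(n)},\ldots,Y_n^{(n)}]$.)
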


\begin{proof}
    The structure of the following argument is similar to the proof of Theorem 59 in \cite{MBWArxiv}. 

    Notice that every element of $\sP_{as}^{+}$ is a finite $\mathbb{Q}(q,t)$-linear combination of terms of the form $T_{\sigma}x^{\lambda}F[X]$ where $\sigma$ is a permutation, $\lambda$ is a partition, and $F\in \Lambda$. Therefore, to show that the sequence of operators $(\Psi_{e_r}^{(n)})_{n\geq 1}$ converges it suffices to show that sequences of the form 
    $$(\Psi_{e_r}^{(n)}(T_{\sigma}x^{\lambda}F[x_1+ \ldots +x_n]))_{n \geq 1} $$ converge. For $n$ sufficiently large, $T_{\sigma}$ commutes with $\Psi_{e_r}^{(n)} = e_r[t^{n}Y_{1}^{(n)}+ \ldots + t^nY_{n}^{(n)}]$ so it suffices to consider only sequences of the form $$(\Psi_{e_r}^{(n)}(x^{\lambda}F[x_1+ \ldots +x_n])) _{n \geq 1} .$$ Let $\lambda$ be a partition, $k := \ell(\lambda)$, $F\in \Lambda$, and take $n > k +r$. Set $\lambda':= (\lambda_1-1,\ldots, \lambda_k-1).$ Recall that $\widetilde{Y}_1^{(n)}X_1 = t^nY_{1}^{(n)}X_1$ from which it follows directly that $\widetilde{Y}_i^{(n)}X_i = t^nY_{i}^{(n)}X_i$ for all $1 \leq i \leq n$. Then for all $1\leq i \leq k$ we have that  
    $$t^nY_i^{(n)}X_1\cdots X_k = \widetilde{Y}_{i}^{(n)}X_1\cdots X_k.$$ This means that as operators on $x_1\ldots x_k\sP_k^{+}$, $t^nY_i^{(n)} = \widetilde{Y}_{i}^{(n)}.$ Note that these operators preserve the subspace $x_1\ldots x_k\sP_k^{+}$. Further, we may naturally extend this argument to show that as operators on $x_1\cdots x_k\sP_k^{+}$ for any $a_1,\ldots, a_k \geq 0$, and any permutation $\gamma \in \mathfrak{S}_{k}$, $$(t^nY_{\gamma(1)}^{(n)})^{a_1}\cdots (t^nY_{\gamma(k)}^{(n)})^{a_k}|_{x_1\cdots x_k\sP_k^{+}} = (\widetilde{Y}_{\gamma(1)}^{(n)})^{a_1}\cdots (\widetilde{Y}_{\gamma(k)}^{(n)})^{a_k}|_{x_1\cdots x_k\sP_k^{+}}.$$ This is notable because, unlike the Cherednik operators $Y_i^{(n)},$ the deformed Cherednik operators $\widetilde{Y}_{i}^{(n)}$ do not mutually commute.
    Therefore again as operators on $\sP_k^{+}$, for all $0 \leq s \leq r$
    $$e_{r-s}[t^nY_1^{(n)}+ \ldots +t^nY_k^{(n)}]X_1\cdots X_k = e_{r-s}(\widetilde{Y}_{1}^{(n)}, \ldots , \widetilde{Y}_{k}^{(n)})X_1\cdots X_k.$$

    Using Lemma \ref{coproduct lemma} we now find the following:

    \begin{align*}
        &e_r[t^nY_1^{(n)}+ \ldots +t^nY_n^{(n)}](x^{\lambda}F[x_1+ \ldots +x_n]) \\
        &= e_r[(t^nY_1^{(n)}+ \ldots +t^nY_k^{(n)}) + (t^nY_{k+1}^{(n)}+ \ldots +t^nY_n^{(n)})](x^{\lambda}F[x_1+ \ldots +x_n])\\
        &=  \sum_{s = 0}^{r} e_{r-s}[t^nY_{1}^{(n)}+ \ldots +t^nY_{k}^{(n)}]e_{s}[t^nY_{k+1}^{(n)}+ \ldots +t^nY_n^{(n)}](x^{\lambda}F[x_1+ \ldots +x_n]).\\
    \end{align*}

    Importantly, since $x^{\lambda}F[x_1+\ldots+x_n]$ is symmetric in $k+1,\ldots, n$, 
    $$\epsilon_{k}^{(n)}(x^{\lambda}F[x_1+\ldots+x_n]) = x^{\lambda}F[x_1+\ldots+x_n]$$ so that by using Lemmas \ref{important lemma} and \ref{commutation with product of x's} we find

    \begin{align*}
        &\sum_{s = 0}^{r} e_{r-s}[t^nY_{1}^{(n)}+ \ldots +t^nY_{k}^{(n)}]e_{s}[t^nY_{k+1}^{(n)}+ \ldots +t^nY_n^{(n)}](x^{\lambda}F[x_1+ \ldots +x_n])\\
        &= \sum_{s = 0}^{r} e_{r-s}[t^nY_{1}^{(n)}+ \ldots +t^nY_{k}^{(n)}]e_{s}[t^nY_{k+1}^{(n)}+ \ldots +t^nY_n^{(n)}]\epsilon_k^{(n)} X_1\cdots X_k (x^{\lambda'}F[x_1+ \ldots +x_n])\\
        &= \sum_{s = 0}^{r} e_{r-s}[t^nY_{1}^{(n)}+ \ldots +t^nY_{k}^{(n)}]X_1\cdots X_k t^{sk+ {s +1\choose 2}} \left(\frac{1-t^{n-k-s+1}}{1-t} \right) \cdots \left(\frac{1-t^{n-k}}{1-t^s} \right)\epsilon_k^{(n)} \\
        &~~~ \times (T_k^{-1}\cdots T_1^{-1})(T_{k+1}^{-1}\cdots T_2^{-1})\cdots (T_{k+s-1}^{-1}\cdots T_s^{-1}) \pi_n^{r} \epsilon_k^{(n)} (x^{\lambda'}F[x_1+ \ldots +x_n])\\
        &=  \sum_{s = 0}^{r} e_{r-s}(t^n\widetilde{Y}_{1}^{(n)}, \ldots, t^n\widetilde{Y}_{k}^{(n)})X_1\cdots X_k t^{sk+ {s +1\choose 2}} \left(\frac{1-t^{n-k-s+1}}{1-t} \right) \cdots \left(\frac{1-t^{n-k}}{1-t^s} \right)\epsilon_k^{(n)} \\
        &~~~ \times (T_k^{-1}\cdots T_1^{-1})(T_{k+1}^{-1}\cdots T_2^{-1})\cdots (T_{k+s-1}^{-1}\cdots T_s^{-1}) \pi_n^{r} \epsilon_k^{(n)} (x^{\lambda'}F[x_1+ \ldots +x_n])\\
        &= \sum_{s = 0}^{r} e_{r-s}(t^n\widetilde{Y}_{1}^{(n)}, \ldots, t^n\widetilde{Y}_{k}^{(n)})t^{{s +1 \choose 2}} \left(\frac{1-t^{n-k-s+1}}{1-t} \right) \cdots \left(\frac{1-t^{n-k}}{1-t^s} \right) \epsilon_k^{(n)} \\
        &~~~ \times (T_k\cdots T_1)(T_{k+1}\cdots T_2)\cdots (T_{k+s-1}\cdots T_s) \pi_n^s (x^{\lambda}F[x_1+ \ldots +x_n]).\\
    \end{align*}

    From here it is clear that 
    \begin{align*}
        & \lim_{n} \Psi_{e_r}^{(n)}(x^{\lambda}F[x_1+\ldots + x_n])\\
        &= \sum_{s = 0}^{r}\left[ \prod_{i=1}^{s}\left( \frac{t^i}{1-t^i}\right) \right] e_{r-s}(\y_1, \ldots, \y_k) \epsilon_k (T_k\cdots T_1)(T_{k+1}\cdots T_2)\cdots (T_{k+s-1}\cdots T_s) \pi^s (x^{\lambda}F[X])\\
    \end{align*}

    which is evidently an element of $\sP(k)^{+} \subset \sP_{as}^{+}.$ Therefore, the sequence of operators $(\Psi_{e_r}^{(n)})_{n \geq 1}$ converges to an operator on $\sP_{as}^{+}$ which we will call $e_r[\Delta].$
    
    We will now prove various properties of $e_r[\Delta].$ For all $1\leq i \leq k-1$ and $0 \leq s \leq r$
    
    \begin{align*}
    &\epsilon_k (T_k\cdots T_1)(T_{k+1}\cdots T_2)\cdots (T_{k+s-1}\cdots T_s) \pi^s T_i \\
    &=  \epsilon_k (T_k\cdots T_1)(T_{k+1}\cdots T_2)\cdots (T_{k+s-1}\cdots T_s) T_{i+s} \pi^s \\
    &=   \epsilon_k (T_k\cdots T_1)(T_{k+1}\cdots T_2)\cdots (T_{k+s-2}\cdots T_{s-2})T_{i+s-1} (T_{k+s-1}\cdots T_s) \pi^s\\
    &= \ldots\\
    &=   \epsilon_k T_i(T_k\cdots T_1)(T_{k+1}\cdots T_2)\cdots (T_{k+s-1}\cdots T_s) \pi^s\\
    &= T_i\epsilon_k (T_k\cdots T_1)(T_{k+1}\cdots T_2)\cdots (T_{k+s-1}\cdots T_s) \pi^s.\\
    \end{align*}

    Therefore, for any $f \in x_1\cdots x_k \sP(k)^{+}$, by expanding $f$ into a sum of terms of the form $T_{\sigma}x^{\lambda}F[X]$ where $\sigma \in \mathfrak{S}_{k}$ and $\lambda$ is a partition with $\ell(\lambda) = k$, we find that 
    $$e_r[\Delta](f) = \sum_{s = 0}^{r} \prod_{i=1}^{s}\left( \frac{t^i}{1-t^i}\right) e_{r-s}(\y_1, \ldots, \y_k) \epsilon_k (T_k\cdots T_1)(T_{k+1}\cdots T_2)\cdots (T_{k+s-1}\cdots T_s) \pi^s (f).$$

    Now let $(\mu|\lambda) \in \Phi.$ Using Corollary 47 in \cite{MBWArxiv} (see Definition \ref{stable-limit non-sym MacD function defn}) and Proposition \ref{limits proposition} we have that 
    \begin{align*}
        & e_r[\Delta](\widetilde{E}_{(\mu|\lambda)})\\
        &= \lim_{n} e_r[t^nY_1^{(n)}+\cdots + t^nY_n^{(n)}](\epsilon_{\ell(\mu)}^{(n)}(E_{\mu*\lambda*0^{n-(\ell(\mu)+\ell(\lambda))}})) \\
        &= \lim_{n} \epsilon_{\ell(\mu)}^{(n)}e_r[t^nY_1^{(n)}+\cdots + t^nY_n^{(n)}](E_{\mu*\lambda*0^{n-(\ell(\mu)+\ell(\lambda))}})\\
        &= \lim_{n} \epsilon_{\ell(\mu)}^{(n)} \left(e_r\left[ \sum_{i=1}^{n}q^{\sort(\mu*\lambda)_i}t^{i}\right]E_{\mu*\lambda*0^{n-(\ell(\mu)+\ell(\lambda))}} \right) \\
        &= \lim_{n} e_r\left[ \sum_{i=1}^{n}q^{\sort(\mu*\lambda)_i}t^{i}\right] \epsilon_{\ell(\mu)}^{(n)}(E_{\mu*\lambda*0^{n-(\ell(\mu)+\ell(\lambda))}})\\
        &= e_r[\kappa_{\sort(\mu*\lambda)}(q,t)] \widetilde{E}_{(\mu|\lambda)}.\\
    \end{align*}

    To see that $[e_r[\Delta], T_i] = 0$ we may check directly:
    \begin{align*}
        & e_r[\Delta]T_i \\
        &= \lim_{n} e_r[t^nY_1^{(n)}+ \ldots +t^nY_n^{(n)}] T_i\\
        &= \lim_{n} T_i e_r[t^nY_1^{(n)}+ \ldots +t^nY_n^{(n)}] \\
        &= T_i e_r[\Delta]. \\
    \end{align*}

    Lastly, since the $\widetilde{E}_{(\mu|\lambda)}$ are a basis of $\sP_{as}^{+}$ (Theorem \ref{weight basis theorem}) it follows that for all $i,r,s \geq 1$,
    \begin{itemize}
        \item $[e_r[\Delta], \y_i ] = 0$
        \item $[e_r[\Delta],e_s[\Delta]] = 0.$
    \end{itemize}
\end{proof}

\begin{remark}
    Note that 
    $$(T_k\cdots T_1)(T_{k+1}\cdots T_2)\cdots (T_{k+s-1}\cdots T_s) \pi^s = (T_k\cdots T_1\pi)^{s}$$ so we have that 
    $$e_r[\Delta]|_{x_1\cdots x_k\sP(k)^{+}} = \sum_{s = 0}^{r} \left[ \prod_{i=1}^{s}\left( \frac{t^i}{1-t^i}\right) \right] e_{r-s}(\y_1, \ldots, \y_k) \epsilon_k (T_k\cdots T_1 \pi)^s.$$ 
\end{remark}

As an immediate consequence we have the following result confirming the conjecture posed in \cite{MBWArxiv}.

\begin{thm}\label{convergence theorem}
    For any symmetric function $F \in \Lambda$ the sequence of operators $(\Psi_{F}^{(n)})_{n \geq 1}$ converges to an operator on $\sP_{as}^{+}$ which we may call $F[\Delta].$ These operators satisfy the following properties:
    \begin{itemize}
        \item $F[\Delta](\widetilde{E}_{(\mu|\lambda)}) = F[\kappa_{\sort(\mu*\lambda)}(q,t)]\widetilde{E}_{(\mu|\lambda)}$
        \item $[F[\Delta],\y_i] = 0$
        \item $[F[\Delta], T_i] = 0$
        \item $[F[\Delta],G[\Delta]] = 0.$
    \end{itemize}
\end{thm}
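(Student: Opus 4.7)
The plan is to bootstrap from the elementary symmetric case established in Proposition \ref{convergence for elementary sym}. Since $\Lambda \cong \mathbb{Q}(q,t)[e_1, e_2, \ldots]$, any symmetric function $F$ may be written as a polynomial $F = P(e_1, \ldots, e_N)$ with coefficients in $\mathbb{Q}(q,t)$ for some $N \geq 1$. Because the Cherednik operators $Y_1^{(n)}, \ldots, Y_n^{(n)}$ commute pairwise in $\mathscr{H}_n$, the map $G \mapsto \Psi_{G}^{(n)} = G[t^n Y_1^{(n)}+\ldots+t^n Y_n^{(n)}]$ is an algebra homomorphism $\Lambda \to \mathrm{End}(\sP_n^{+})$, so for every $n$ we have the identity
$$\Psi_{F}^{(n)} = P\bigl(\Psi_{e_1}^{(n)}, \ldots, \Psi_{e_N}^{(n)}\bigr).$$

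Next I would appeal to Proposition \ref{product of limits proposition}, which says that products of convergent sequences of operators converge to the product of the limits, together with obvious linearity. Applying this repeatedly to the factors of $P(\Psi_{e_1}^{(n)}, \ldots, \Psi_{e_N}^{(n)})$, and using Proposition \ref{convergence for elementary sym} to handle each factor, one concludes that the sequence $(\Psi_F^{(n)})_{n \geq 1}$ converges to the operator
$$F[\Delta] := P\bigl(e_1[\Delta], \ldots, e_N[\Delta]\bigr)$$
on $\sP_{as}^{+}$. A small consistency check is that $F[\Delta]$ does not depend on the choice of polynomial representative $P$ for $F$; this is immediate once we establish the eigenvalue formula and recall from Theorem \ref{weight basis theorem} that the $\widetilde{E}_{(\mu|\lambda)}$ form a basis.

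The four listed properties then all inherit directly from the corresponding properties of the $e_r[\Delta]$. For the eigenvalue formula, since plethystic evaluation $G \mapsto G[\kappa_{\sort(\mu*\lambda)}(q,t)]$ is an algebra homomorphism $\Lambda \to \mathbb{Q}(q,t)$, one has
$$F[\Delta]\bigl(\widetilde{E}_{(\mu|\lambda)}\bigr) = P\bigl(e_1[\kappa_{\sort(\mu*\lambda)}], \ldots, e_N[\kappa_{\sort(\mu*\lambda)}]\bigr)\widetilde{E}_{(\mu|\lambda)} = F[\kappa_{\sort(\mu*\lambda)}(q,t)]\,\widetilde{E}_{(\mu|\lambda)},$$
where each $e_r[\Delta]$-eigenvalue comes from Proposition \ref{convergence for elementary sym}. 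The commutators $[F[\Delta], \y_i]$ and $[F[\Delta], T_i]$ vanish because $F[\Delta]$ is a polynomial in operators each of which commutes with $\y_i$ and $T_i$, and $[F[\Delta], G[\Delta]] = 0$ because the $e_r[\Delta]$ pairwise commute, so any two polynomial expressions in them commute.

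There is no serious obstacle here — the heavy lifting has already been done in Proposition \ref{convergence for elementary sym}, and the rest is a formal consequence of multiplicativity of the Ion-Wu limit procedure (Proposition \ref{product of limits proposition}) and the fact that $\Lambda$ is polynomially generated by $e_1, e_2, \ldots$. The only subtle point is the well-definedness of $F[\Delta]$, but this is settled as soon as the spectral formula is in hand.
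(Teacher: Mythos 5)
Your proposal is correct and follows essentially the same route as the paper: write $F$ as a polynomial in the $e_r$, use multiplicativity of Ion--Wu limits (Proposition \ref{product of limits proposition}) together with Proposition \ref{convergence for elementary sym} to get convergence, and deduce the four properties from the elementary symmetric case. The only difference is your added well-definedness check, which is not needed since $e_1, e_2, \ldots$ are algebraically independent and the polynomial representative $P$ is therefore unique.
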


\begin{proof}
    Recall that the ring of symmetric functions $\Lambda$ is generated algebraically by the elementary symmetric polynomials $e_1,e_2,\ldots.$ For any $F \in \Lambda$ we may write $F = f(e_1,e_2,\ldots,e_r)$ so that for all $n \geq 1$ 
    $$\Psi_{F}^{(n)} = f(\Psi_{e_1}^{(n)},\ldots, \Psi_{e_r}^{(n)}).$$ By applying Propositions \ref{product of limits proposition} and \ref{convergence for elementary sym} we find that $(\Psi_{F}^{(n)})_{n \geq 1}$ converges and that
    $$F[\Delta]:= \lim_{n} \Psi_{F}^{(n)} = f(\lim_{n} \Psi_{e_1}^{(n)},\ldots, \lim_{n} \Psi_{e_r}^{(n)}) = f(e_1[\Delta],\ldots, e_r[\Delta]).$$ For $(\mu |\lambda) \in \Phi$ we see that 
    \begin{align*}
        &F[\Delta](\widetilde{E}_{(\mu|\lambda)})\\
        &= f(e_1[\Delta],\ldots, e_r[\Delta])(\widetilde{E}_{(\mu|\lambda)})\\
        &= f(e_1[\kappa_{\sort(\mu*\lambda)}(q,t)],\ldots, e_r[\kappa_{\sort(\mu*\lambda)}(q,t)])(\widetilde{E}_{(\mu|\lambda)})\\
        &= F[\kappa_{\sort(\mu*\lambda)}(q,t)]\widetilde{E}_{(\mu|\lambda)}.\\
    \end{align*}

    The other properties follow directly from Theorem \ref{weight basis theorem} and Proposition \ref{convergence for elementary sym}.
    
\end{proof}

\begin{example}
    The operator $e_2[\Delta]$ acts on $x_1 x_2 x_3 \sP(3)^{+}$ as 
    $$(\y_1\y_2+ \y_1\y_3+\y_2\y_3) + \frac{t}{1-t}(\y_1+\y_2+\y_3)\epsilon_3T_3T_2T_1\pi + \frac{t^3}{(1-t)(1-t^2)}\epsilon_3 T_3T_2T_1 T_4T_3T_2 \pi^2.$$ If we instead consider $e_{4}[\Delta]$ acting on $\sP(0)^{+} = \Lambda[X]$ then we get 
    $$\frac{t^{10}}{(1-t)(1-t^2)(1-t^3)(1-t^4)}\epsilon \pi^4.$$

    As an example computation we have that 
    $$p_2[\Delta](\widetilde{E}_{(4,1,2|5,4,2,2,1)}) = \left(q^{10}t^2 + q^8t^4 + q^8t^6+q^4t^8+q^4t^{10}+q^4t^{12}+q^2t^{14} + q^2t^{16} + t^{18} + t^{20}+\ldots\right)\widetilde{E}_{(4,1,2|5,4,2,2,1)}.$$
\end{example}

\begin{remark}\label{deltas commute with lowering}
    It is easy to see that for all $F \in \Lambda$, 
    $\epsilon_{k}F[\Delta] = F[\Delta]\epsilon_{k}$ by direct calculation:
\begin{align*}
        &\epsilon_{k}F[\Delta]\\
        &= \lim_{n}\epsilon_{k}^{(n)}\Psi_{F}^{(n)}\\
        &= \lim_{n} \frac{1}{[n-k]_{t}!} \sum_{\sigma \in \mathfrak{S}_{(1^k,n-k)}} t^{{n-k \choose 2} - \ell(\sigma)} T_{\sigma} F[t^nY_1^{(n)}+\ldots + t^nY_{n}^{(n)}]    \\
        &= \lim_{n} \frac{1}{[n-k]_{t}!} \sum_{\sigma \in \mathfrak{S}_{(1^k,n-k)}} t^{{n-k \choose 2} - \ell(\sigma)} F[t^nY_1^{(n)}+\ldots + t^nY_{n}^{(n)}] T_{\sigma}  \\
        &= \lim_{n} F[t^nY_1^{(n)}+\ldots + t^nY_{n}^{(n)}] \frac{1}{[n-k]_{t}!} \sum_{\sigma \in \mathfrak{S}_{(1^k,n-k)}} t^{{n-k \choose 2} - \ell(\sigma)}  T_{\sigma} \\
        &= F[\Delta] \epsilon_{k}.\\
 \end{align*}
\end{remark}

\subsection{Commutation Relations}

In this section we compute some of the commutation relations between the $\Delta$-operators $F[\Delta]$ and the operator $\widetilde{\pi}$ on $\sP_{as}^{+}$. These relations are conceptually important because in the case of the finite rank DAHA in type $\GL_n$ the analogous commutation relations allow for one to develop a theory of intertwiners and the Knop-Sahi relations for non-symmetric Macdonald polynomials. 

We start with the following result which will follow easily using the properties of Ion-Wu limits and particularly Proposition \ref{product of limits proposition}.

\begin{prop}\label{commutation relation for Delta}
    For $F \in \Lambda$
    $$\widetilde{\pi}F[\Delta] =  F[\Delta + (q^{-1}-1)\y_1] \widetilde{\pi}.$$
\end{prop}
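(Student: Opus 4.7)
The plan is to prove the identity at the finite rank level in $\sH_n$ for each $n$ and then pass to the Ion-Wu limit via Proposition \ref{product of limits proposition}. The key input is the pair of commutations $\widetilde{\pi}_n Y_i^{(n)} = Y_{i+1}^{(n)}\widetilde{\pi}_n$ for $1\le i\le n-1$ and $\widetilde{\pi}_n Y_n^{(n)} = q^{-1} Y_1^{(n)}\widetilde{\pi}_n$ (both recorded immediately after Definition \ref{defn1}). Iterating these on a monomial shows $\widetilde{\pi}_n Y_1^{a_1}\cdots Y_n^{a_n} = q^{-a_n}Y_1^{a_n}Y_2^{a_1}\cdots Y_n^{a_{n-1}}\widetilde{\pi}_n$, and since the coefficients of $F$ in its monomial expansion are invariant under cyclic reindexing of the exponents (because $F$ is symmetric), combining with commutativity of the $Y_i^{(n)}$'s gives the finite-level identity
$$\widetilde{\pi}_n F(t^nY_1^{(n)},\ldots,t^nY_n^{(n)}) = F(q^{-1}t^nY_1^{(n)}, t^nY_2^{(n)},\ldots,t^nY_n^{(n)})\widetilde{\pi}_n.$$

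To take the limit I would first reduce to $F=p_k$: since $\Delta$ and $\y_1$ commute (Theorem \ref{almost sym macdonald operator}), I can define $F[\Delta+(q^{-1}-1)\y_1]$ via the plethystic rule $p_k[\Delta+(q^{-1}-1)\y_1] := p_k[\Delta]+(q^{-k}-1)\y_1^k$ extended as an algebra map $\Lambda \to \mathrm{End}(\sP_{as}^+)$; with this definition both sides of the claimed identity are multiplicative in $F$, so it suffices to verify the case $F=p_k$. For this choice the finite identity reads
$$\widetilde{\pi}_n p_k(t^nY_1^{(n)},\ldots,t^nY_n^{(n)}) = \Bigl(p_k(t^nY_1^{(n)},\ldots,t^nY_n^{(n)}) + (q^{-k}-1)(t^nY_1^{(n)})^k\Bigr)\widetilde{\pi}_n.$$
Applying both sides to $\Xi_n(f)$ for arbitrary $f\in\sP_{as}^+$, the left side converges to $\widetilde{\pi}\,p_k[\Delta](f)$ and the bulk term on the right converges to $p_k[\Delta]\widetilde{\pi}(f)$, both by Proposition \ref{product of limits proposition}. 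For the extra term, $\widetilde{\pi}_n\Xi_n(f)\in x_1\sP_n^+$, and on this subspace $t^nY_1^{(n)} = \widetilde{Y}_1^{(n)}$ (because $\widetilde{Y}_i^{(n)}X_i = t^nY_i^{(n)}X_i$). Since $\widetilde{Y}_1^{(n)} = t^n\rho\pi_n T_{n-1}^{-1}\cdots T_1^{-1}$ factors through $\rho$, it sends $\sP_n^+$ into $x_1\sP_n^+$, so the identification may be iterated: $(t^nY_1^{(n)})^k\widetilde{\pi}_n\Xi_n(f) = (\widetilde{Y}_1^{(n)})^k\widetilde{\pi}_n\Xi_n(f)$, which tends to $\y_1^k\widetilde{\pi}(f)$. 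Assembling the pieces gives the $F=p_k$ case, and the algebra-homomorphism property extends the result to all $F\in\Lambda$.

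The main obstacle is the fact that $(t^nY_1^{(n)})^k$ does not admit an Ion-Wu limit on all of $\sP_{as}^+$; only the deformed Cherednik operator $\widetilde{Y}_1^{(n)}$ converges, with limit $\y_1$. The resolution is the bookkeeping just described: after precomposing with $\widetilde{\pi}_n$ on the right the input always lies in $x_1\sP_n^+$, where $t^nY_1^{(n)}$ and $\widetilde{Y}_1^{(n)}$ coincide, and since $\widetilde{Y}_1^{(n)}$ maps this invariant subspace into itself the identification survives $k$-fold iteration. A secondary subtlety is getting the plethystic convention right on the RHS so that $(q^{-k}-1)\y_1^k$ is correctly packaged as $p_k[(q^{-1}-1)\y_1]$; this is what pins down the precise form of the commutation.
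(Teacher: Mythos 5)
Your argument is correct and follows essentially the same route as the paper: the same finite-rank commutation of $\widetilde{\pi}_n$ past symmetric polynomials in the $Y_i^{(n)}$, the same replacement of $t^nY_1^{(n)}$ by $\widetilde{Y}_1^{(n)}$ justified by the fact that everything to the right of this operator lands in $x_1\sP_n^{+}$ (where the two agree and which $\widetilde{Y}_1^{(n)}$ preserves), and the same passage to the limit via Proposition \ref{product of limits proposition}. The only divergence is organizational --- you reduce to $F=p_k$ by multiplicativity of both sides, whereas the paper treats general $F$ in one stroke via the coproduct $F[X+Y]=\sum_i G_i[X]H_i[Y]$ --- which is a difference in bookkeeping rather than in method.
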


\begin{proof}
Let $F \in \Lambda$ and $G_{i}, H_{i} \in \Lambda$ such that 
$$F[X+Y] = \sum_{i} G_i[X] H_{i}[Y].$$
 We may compute directly:

 \begin{align*}
        &\widetilde{\pi} F[\Delta] \\
        &= \lim_{n} \widetilde{\pi}_{n}F[t^nY_1^{(n)}+\ldots + t^nY_{n}^{(n)}]\\
        &= \lim_{n} F[t^nY_2^{(n)} + \ldots + t^n Y_n^{(n)} + q^{-1}t^n Y_1^{(n)}] \widetilde{\pi}_{n}\\
        &= \lim_{n} F[t^nY_1^{(n)} + \ldots + t^n Y_n^{(n)} + (q^{-1}-1)t^n Y_1^{(n)}] \widetilde{\pi}_{n}\\
        &= \lim_{n} \sum_{i} G_i[t^nY_1^{(n)} + \ldots + t^n Y_n^{(n)}] H_{i}[(q^{-1}-1)t^n Y_1^{(n)}]\widetilde{\pi}_{n}\\
        &= \lim_{n} \sum_{i} G_i[t^nY_1^{(n)} + \ldots + t^n Y_n^{(n)}] H_{i}[(q^{-1}-1)t^n Y_1^{(n)}]X_1T_1^{-1}\cdots T_{n-1}^{-1}\\
        &= \lim_{n} \sum_{i} G_i[t^nY_1^{(n)} + \ldots + t^n Y_n^{(n)}] H_{i}[(q^{-1}-1) \widetilde{Y}_1^{(n)}]X_1T_1^{-1}\cdots T_{n-1}^{-1}\\
        &= \lim_{n} \sum_{i} G_i[t^nY_1^{(n)} + \ldots + t^n Y_n^{(n)}] H_{i}[(q^{-1}-1) \widetilde{Y}_1^{(n)}]\widetilde{\pi}_{n}\\
        &= \sum_{i} \left( \lim_{n} G_i[t^nY_1^{(n)} + \ldots + t^n Y_n^{(n)}] \right) \left( \lim_{n} H_{i}[(q^{-1}-1) \widetilde{Y}_1^{(n)}] \right) \left( \lim_{n} \widetilde{\pi}_{n} \right) \\
        &= \sum_{i} G_i[\Delta] H_{i}[(q^{-1}-1)\y_1] \widetilde{\pi} \\
        &= F[\Delta + (q^{-1}-1)\y_1] \widetilde{\pi}.\\
 \end{align*}
\end{proof}

As an immediate consequence we find the following. 

\begin{cor}\label{commutation relation cor}
    For every $r \geq 1$
    $$[\widetilde{\pi}, p_r[\Delta]] = (q^{-r}-1)\y_1^{r}\widetilde{\pi}.$$
\end{cor}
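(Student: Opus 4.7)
The plan is to specialize Proposition \ref{commutation relation for Delta} to the power-sum $F = p_r$ and read off the commutator. Since $p_r$ is additive in its plethystic argument, $p_r[A+B] = p_r[A] + p_r[B]$, the right-hand side of
\[
\widetilde{\pi} p_r[\Delta] = p_r[\Delta + (q^{-1}-1)\y_1]\widetilde{\pi}
\]
separates as $p_r[\Delta]\widetilde{\pi} + p_r[(q^{-1}-1)\y_1]\widetilde{\pi}$. Subtracting $p_r[\Delta]\widetilde{\pi}$ from both sides reduces the corollary to the plethystic identity $p_r[(q^{-1}-1)\y_1] = (q^{-r}-1)\y_1^r$.

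To justify that identity, I would appeal to the convention already in play in the proof of Proposition \ref{commutation relation for Delta}: an expression like $(q^{-1}-1)\widetilde{Y}_1^{(n)}$ is treated plethystically as a single scaled letter, so
\[
p_r[(q^{-1}-1)\widetilde{Y}_1^{(n)}] = p_r[q^{-1}-1]\cdot (\widetilde{Y}_1^{(n)})^r = (q^{-r}-1)(\widetilde{Y}_1^{(n)})^r,
\]
and then Ion-Wu limits (Proposition \ref{product of limits proposition}) yield $p_r[(q^{-1}-1)\y_1] = (q^{-r}-1)\y_1^r$.

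If one wants a check free of any plethystic interpretation, it is quick to redo the computation at the finite level: the commutation relations $Y_{i+1}\widetilde{\pi}_n = \widetilde{\pi}_n Y_i$ for $1 \leq i < n$ and $Y_1\widetilde{\pi}_n = \widetilde{\pi}_n qY_n$ recorded just after Definition \ref{defn1} show that conjugating $\sum_{i=1}^{n}(t^nY_i^{(n)})^r$ by $\widetilde{\pi}_n$ produces exactly one extra term, namely $(q^{-r}-1)(t^nY_1^{(n)})^r$, relative to its untwisted self. Using $t^nY_1^{(n)}X_1 = \widetilde{Y}_1^{(n)}X_1$ (which applies because $\widetilde{\pi}_n$ begins with the factor $X_1$) and passing to the Ion-Wu limit via Proposition \ref{product of limits proposition} turns this finite identity into the desired operator identity. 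There is no real obstacle beyond this bookkeeping; the result is essentially immediate from Proposition \ref{commutation relation for Delta}, with the only subtlety being the correct plethystic reading of $p_r[(q^{-1}-1)\y_1]$, which the finite-variable sanity check makes unambiguous.
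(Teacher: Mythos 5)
Your proposal is correct and is essentially identical to the paper's proof: both specialize Proposition \ref{commutation relation for Delta} to $F = p_r$ and use the additivity of power sums under plethysm together with $p_r[(q^{-1}-1)\y_1] = (q^{-r}-1)\y_1^{r}$. The additional finite-variable sanity check you offer is not needed in the paper but does no harm.
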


\begin{proof}
    Using Proposition \ref{commutation relation for Delta} applied to $F[X] = p_r[X]$ gives 
    $$\widetilde{\pi}p_r[\Delta] = p_r[\Delta + (q^{-1}-1)\y_1] \widetilde{\pi} = (p_r[\Delta] + (q^{-r}-1)\y_1^{r})\widetilde{\pi}.$$
\end{proof}

\begin{remark}
    More generally, we may write for any $F \in \Lambda$
    $$[\widetilde{\pi},F[\Delta]] = \left(F[\Delta + (q^{-1}-1)\y_1] - F[\Delta] \right)\widetilde{\pi}.$$
\end{remark}

Lastly, we compute the full commutation relations between the limit Cherednik operators $\y_i$ and $\widetilde{\pi}.$ Interestingly, most of these relations mimic the standard finite rank DAHA situation except for $\y_1\widetilde{\pi}$ which now involves $\Delta.$

\begin{prop}
    $$\y_i\widetilde{\pi} =  \begin{cases}
    \widetilde{\pi}\y_{i-1} & i > 1 \\
    \frac{\widetilde{\pi} \Delta- \Delta\widetilde{\pi}}{q^{-1}-1} & i = 1.\\
     \end{cases}$$
\end{prop}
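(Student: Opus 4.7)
The plan is to handle the two cases separately. The case $i = 1$ is immediate from Corollary~\ref{commutation relation cor}: specializing that statement at $r = 1$ gives $[\widetilde{\pi},\Delta] = (q^{-1}-1)\y_1\widetilde{\pi}$, which rearranges to the claimed formula $\y_1\widetilde{\pi} = (\widetilde{\pi}\Delta - \Delta\widetilde{\pi})/(q^{-1}-1)$. For $i > 1$ I would deduce the claim from the finite-rank identity
\[
\widetilde{Y}_i^{(n)}\widetilde{\pi}_n = \widetilde{\pi}_n \widetilde{Y}_{i-1}^{(n)}
\qquad (2 \leq i \leq n),
\]
together with Proposition~\ref{product of limits proposition}: since $\y_i = \lim_n \widetilde{Y}_i^{(n)}$ and $\widetilde{\pi} = \lim_n \widetilde{\pi}_n$, we obtain $\y_i\widetilde{\pi} = \lim_n \widetilde{Y}_i^{(n)}\widetilde{\pi}_n = \lim_n \widetilde{\pi}_n\widetilde{Y}_{i-1}^{(n)} = \widetilde{\pi}\y_{i-1}$.

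I would prove the finite-rank identity by induction on $i$. For the base case $i=2$, I would unpack the definition $\widetilde{Y}_i^{(n)} = t^{n-i+1}T_{i-1}\cdots T_1 \rho \pi_n T_{n-1}^{-1}\cdots T_i^{-1}$ on both sides and migrate the $X_1$ of $\widetilde{\pi}_n = X_1 T_1^{-1}\cdots T_{n-1}^{-1}$ inward using: (a) $T_j^{-1}$ commutes with $X_1$ and with $\rho$ for $j \geq 2$; (b) $\pi_n X_1 = X_2 \pi_n$ and $\rho$ commutes with $X_2$; (c) $T_1 X_2 = tX_1 T_1^{-1}$ from $X_2 = tT_1^{-1}X_1 T_1^{-1}$; and (d) the transport identity $T_j^{-1}\rho\pi_n = \rho\pi_n T_{j-1}^{-1}$ for $2\leq j\leq n-1$, which comes from $\pi_n T_{j-1}\pi_n^{-1} = T_j$ combined with (a). After these manipulations, both $\widetilde{Y}_2^{(n)}\widetilde{\pi}_n$ and $\widetilde{\pi}_n\widetilde{Y}_1^{(n)}$ reduce, up to a common scalar and the common prefix $X_1 T_1^{-1}\rho\pi_n$, to the two products
\[
T_1^{-1}T_2^{-1}\cdots T_{n-1}^{-1}T_{n-2}^{-1}\cdots T_1^{-1} \quad \text{and} \quad T_{n-1}^{-1}T_{n-2}^{-1}\cdots T_1^{-1}T_2^{-1}\cdots T_{n-1}^{-1},
\]
both of which are reduced expressions for the transposition $(1\,n)\in\mathfrak{S}_n$; by Matsumoto's theorem they are equal in the Hecke algebra.

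For the inductive step I would use the braid-type identity $T_{j-1}\widetilde{Y}_{j-1}^{(n)}T_{j-1} = t\widetilde{Y}_j^{(n)}$ (a direct consequence of the definition of $\widetilde{Y}$, absorbing the $T_{j-1}^{-1}$ in the tail) together with the relation $T_j\widetilde{\pi}_n = \widetilde{\pi}_n T_{j-1}$ for $2\leq j\leq n-1$. The latter follows from $T_j X_1 = X_1 T_j$ combined with the Hecke-algebra identity $T_j\cdot T_1^{-1}\cdots T_{n-1}^{-1} = T_1^{-1}\cdots T_{n-1}^{-1}\cdot T_{j-1}$, which in turn is the Hecke lift of $s_j\cdot(1\,2\,\cdots\,n) = (1\,2\,\cdots\,n)\cdot s_{j-1}$ in $\mathfrak{S}_n$. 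Assuming the identity for $i$, the chain
\[
\widetilde{Y}_{i+1}^{(n)}\widetilde{\pi}_n = t^{-1}T_i\widetilde{Y}_i^{(n)}T_i\widetilde{\pi}_n = t^{-1}T_i\widetilde{Y}_i^{(n)}\widetilde{\pi}_n T_{i-1} = t^{-1}T_i\widetilde{\pi}_n\widetilde{Y}_{i-1}^{(n)}T_{i-1} = t^{-1}\widetilde{\pi}_n T_{i-1}\widetilde{Y}_{i-1}^{(n)}T_{i-1} = \widetilde{\pi}_n\widetilde{Y}_i^{(n)}
\]
closes the induction. The main obstacle is the bookkeeping in the base case: because $\rho$ is only $\mathbb{Q}(q,t)$-linear and not an element of the finite Hecke algebra, its interactions with $X_i$, $\pi_n$, and the $T_j$ have to be verified by hand, and it is precisely there that the coincidence of the two reduced words for $(1\,n)$ is what makes the equality hold.
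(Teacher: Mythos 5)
Your argument is correct and follows essentially the same route as the paper: the $i=1$ case is read off from Proposition \ref{commutation relation for Delta} exactly as in the text, and for $i>1$ both you and the paper reduce to the finite-rank identity $\widetilde{Y}_i^{(n)}\widetilde{\pi}_n = \widetilde{\pi}_n\widetilde{Y}_{i-1}^{(n)}$ and pass to the limit via Proposition \ref{product of limits proposition}. The only difference is organizational: the paper verifies that identity for all $i>1$ in a single direct rewrite (using the same commutations of $\rho$, $\pi_n$, $X_1$, and the $T_j$ that you list), whereas you check only $i=2$ by hand and then induct using $T_{i-1}\widetilde{Y}_{i-1}^{(n)}T_{i-1}=t\widetilde{Y}_i^{(n)}$ and $T_i\widetilde{\pi}_n=\widetilde{\pi}_nT_{i-1}$, both of which hold as you claim.
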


\begin{proof}
For $i = 1$ we have that from Proposition \ref{commutation relation for Delta} 
    $$\widetilde{\pi}\Delta - \Delta \widetilde{\pi} = (q^{-1}-1)\y_1\widetilde{\pi}$$ and hence 
    $$\y_1 \widetilde{\pi} = \frac{\widetilde{\pi} \Delta- \Delta\widetilde{\pi}}{q^{-1}-1}.$$
    Let $i > 1.$ We see that 
    \begin{align*}
        &\y_i\widetilde{\pi} \\
        &= \lim_{n} \widetilde{Y}_{i}^{(n)}\widetilde{\pi}_n \\
        &= \lim_{n} \left(t^{n-i+1}T_{i-1}\cdots T_1 \rho \pi_n T_{n-1}^{-1}\cdots T_i^{-1}\right) \left( X_1 T_1^{-1}\cdots T_{n-1}^{-1} \right) \\
        &= \lim_{n} t^{n-i+1}T_{i-1}\cdots T_1 \rho \pi_n X_1 T_{n-1}^{-1}\cdots T_i^{-1} T_1^{-1}\cdots T_{n-1}^{-1}\\
        &= \lim_{n} t^{n-i+1}T_{i-1}\cdots T_1 \rho X_2 \pi_n  T_{n-1}^{-1}\cdots T_i^{-1} T_1^{-1}\cdots T_{n-1}^{-1}\\
        &= \lim_{n} t^{n-i+1}T_{i-1}\cdots T_1 X_2\rho  \pi_n  T_{n-1}^{-1}\cdots T_i^{-1} T_1^{-1}\cdots T_{n-1}^{-1}\\
        &= \lim_{n} t^{n-i+1}T_{i-1}\cdots T_2 (tX_1T_1^{-1})\rho  \pi_n  T_{n-1}^{-1}\cdots T_i^{-1} T_1^{-1}\cdots T_{n-1}^{-1} \\
        &= \lim_{n} t^{n-i+2} X_1 T_{i-1}\cdots T_2T_1^{-1} \rho \pi_n T_1^{-1}\cdots T_{i-2}^{-1} T_{n-1}^{-1}\cdots T_i^{-1}T_{i-1}^{-1}\cdots T_{n-1}^{-1}\\ 
        &= \lim_{n} t^{n-i+2} X_1 T_{i-1}\cdots T_2T_1^{-1} \rho T_{2}^{-1}\cdots T_{i-1}^{-1} \pi_n  T_{n-1}^{-1}\cdots T_i^{-1}T_{i-1}^{-1}\cdots T_{n-1}^{-1}\\
        &= \lim_{n} t^{n-i+2} X_1 T_{i-1}\cdots T_2T_1^{-1}T_{2}^{-1}\cdots T_{i-1}^{-1} \rho  \pi_n  T_{n-1}^{-1}\cdots T_i^{-1}T_{i-1}^{-1}\cdots T_{n-1}^{-1}\\
        &= \lim_{n} t^{n-i+2} X_1 T_{1}^{-1}\cdots T_{i-1}^{-1}T_{i-2}\cdots T_{1} \rho  \pi_n  T_{i-1}^{-1}\cdots T_{n-2}^{-1}T_{n-1}^{-1}\cdots T_{i-1}^{-1}\\
        &= \lim_{n} t^{n-i+2} X_1 T_{1}^{-1}\cdots T_{i-1}^{-1}T_{i-2}\cdots T_{1} \rho  T_{i}^{-1}\cdots T_{n-1}^{-1}\pi_n  T_{n-1}^{-1}\cdots T_{i-1}^{-1}\\
        &= \lim_{n} t^{n-i+2} X_1 T_{1}^{-1}\cdots T_{i-1}^{-1}T_{i-2}\cdots T_{1}T_{i}^{-1}\cdots T_{n-1}^{-1} \rho  \pi_n  T_{n-1}^{-1}\cdots T_{i-1}^{-1}\\
        &= \lim_{n} t^{n-i+2} X_1 T_{1}^{-1}\cdots T_{i-1}^{-1}T_{i}^{-1}\cdots T_{n-1}^{-1}T_{i-2}\cdots T_{1} \rho  \pi_n  T_{n-1}^{-1}\cdots T_{i-1}^{-1}\\
        &= \lim_{n}  \left(X_1 T_{1}^{-1}\cdots T_{n-1}^{-1}\right)\left(t^{n-i+2}T_{i-2}\cdots T_{1} \rho  \pi_n  T_{n-1}^{-1}\cdots T_{i-1}^{-1}\right)\\
        &= \lim_{n} \widetilde{\pi}_n\widetilde{Y}_{i-1}^{(n)}\\
        &= \widetilde{\pi}\y_{i-1}.\\
    \end{align*}

\end{proof}

\subsection{$\mathbb{B}_{q,t}^{\text{ext}}$ Action}

We refer the reader to \cite{GCM_2017} and \cite{gonzález2023calibrated} for relevant background surrounding the algebra $\mathbb{B}_{q,t}.$ Importantly, in their conventions $q,t$ have swapped roles. We will show in this short section how to relate the $\Delta$-operators $F[\Delta]$ constructed in this paper to those defined by Gonz\'{a}lez-Gorsky-Simental.

Let us begin by recalling the definition of the extended algebra $\mathbb{B}_{q,t}^{\text{ext}}$.

\begin{defn}\label{ext B qt relations}
    The algebra $\mathbb{B}_{q,t}^{\text{ext}}$ is generated by $\mathbb{B}_{q,t}$ along with loops at each vertex $k \geq 0$ labelled by $\Delta_{p_m}$ for all $m \geq 1$ such that 
    \begin{itemize}
        \item $[\Delta_{p_m},\Delta_{p_{\ell}}] = 0$
        \item $[\Delta_{p_m},T_i] = [\Delta_{p_m},z_i] = [\Delta_{p_m},d_{-}] = 0$
        \item $[\Delta_{p_m}, d_{+}] = z_1^{m}d_{+}.$
    \end{itemize}
\end{defn}

We begin by noting that the commutation relation in Corollary \ref{commutation relation cor} is very similar to the defining relations for $\mathbb{B}_{q,t}^{\text{ext}}$ but not exactly identical. To better match conventions we may consider for $m \geq 1$ the normalized operators
$$\Delta_{p_m}:= \frac{1}{q^{m}-1}\left(t^{-m}p_{m}[\Delta] - \frac{1}{1-t^m}\right).$$ Note now that for any $(\mu|\lambda) \in \Phi$ and $m \geq 1$
$$\Delta_{p_m}(\widetilde{E}_{(\mu|\lambda)}) = \left( \sum_{i \geq 1}\left(\frac{q^{m\sort(\mu*\lambda)_i}-1}{q^m-1}\right)t^{m(i-1)} \right)\widetilde{E}_{(\mu|\lambda)}.$$ In particular, $\Delta_{p_m}(1) = 0$ for all $m \geq 1.$ Using Theorem \ref{convergence theorem}, Remark \ref{deltas commute with lowering}, and Corollary \ref{commutation relation cor} the usual action of $\mathbb{B}_{q,t}$ (see the author's prior paper for more details \cite{weising2024double}) along with these new $\Delta$-operators $\Delta_{p_m}$ define a representation of $\mathbb{B}_{q,t}^{\text{ext}}$ on $L_{\bullet}:= \bigoplus_{k \geq 0}x_1\cdots x_k \sP(k)^{+}$. In particular, since 
    $$[\widetilde{\pi},p_m[\Delta]] = (q^{-m}-1)\y_1^{m}\widetilde{\pi}$$ we see that 
    $$[\Delta_{p_m},\widetilde{\pi}] = \y_1^{m}\widetilde{\pi}.$$ 

From the work of Ion-Wu \cite{Ion_2022} we know there exists a $\mathbb{B}_{q,t}$ module isomorphism $L_{\bullet} \rightarrow V_{\bullet}$ given by 
$$x_1^{a_1}\cdots x_k^{a_k}F[x_{k+1}+x_{k+2}+\ldots] \rightarrow y_1^{a_1-1}\cdots y_k^{a_k-1}F\left[\frac{X}{t-1}\right]$$ where 
$$V_{\bullet}:= \bigoplus_{k \geq 0} \mathbb{Q}(q,t)[y_1,\ldots, y_k]\otimes \Lambda[X]$$ is the $\mathbb{B}_{q,t}$ module originally considered by Carlsson-Gorsky-Mellit \cite{GCM_2017}. Now we may use the following lemma: 

\begin{lem}\label{B qt poly rep generation}\cite{gonzález2023calibrated}
    The $\mathbb{B}_{q,t}$ polynomial representation $V_{\bullet}$ is generated by $1 \in V_{0}.$
\end{lem}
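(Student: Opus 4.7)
The plan is to prove $V_{\bullet}$ is cyclic over $\mathbb{B}_{q,t}$ with generator $1 \in V_0$ by induction on the vertex index $k$, showing that the submodule $M := \mathbb{B}_{q,t}\cdot 1$ contains each $V_k = \mathbb{Q}(q,t)[y_1,\ldots,y_k]\otimes \Lambda[X]$ in turn. The proof splits naturally into a ``horizontal'' step (fixing $k$ and generating the $y$-polynomial factor and the $\Lambda[X]$ factor at vertex $k$) and a ``vertical'' step (moving up from vertex $k-1$ to vertex $k$ via the raising operator $d_+$).

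For the base case $k=0$, I would use the fact that $\mathbb{B}_{q,t}$ contains at the loop at vertex $0$ the subalgebra realizing the Schiffmann--Vasserot action of the positive elliptic Hall algebra $\mathscr{E}^+$ on $\Lambda$. In particular the multiplication operators $p_r[X]^\bullet$ are loops at $0$, and since $\Lambda = \mathbb{Q}(q,t)[p_1,p_2,\ldots]$, their repeated application to $1$ fills out all of $V_0$. Hence $V_0 \subseteq M$.

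For the inductive step, assume $V_0,\ldots,V_{k-1}\subseteq M$. Then $d_+(V_{k-1}) \subseteq M$, and I would show that this image together with the action of $T_1,\ldots,T_{k-1}$, the Cherednik-like operators $z_1,\ldots,z_k$, and the vertex-$k$ loops generates all of $V_k$. The strategy is a triangularity argument: $d_+$ introduces a new $y$-variable in a controlled way (sending a vector at level $k-1$ into level $k$ with a predictable leading $y_k$-degree), and successive application of $z_k$ then raises the $y_k$-degree by one modulo terms of lower $y_k$-degree. Combined with the $T_i$'s (which permute the roles of the $y$'s up to lower-order corrections) and the vertex-$k$ loops (which handle the $\Lambda[X]$ tensor factor exactly as in the base case), a straightforward induction on the total $y$-degree produces every monomial $y_1^{a_1}\cdots y_k^{a_k}F[X]$ modulo elements already known to lie in $M$.

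The main obstacle is establishing the triangularity cleanly: one must choose a filtration on $V_k$ (most naturally by $y$-degree, or equivalently by the calibrated-basis eigenvalues of the $z_i$'s) and verify that the leading coefficients arising from $d_+$, $z_i$, and $T_i$ are invertible in $\mathbb{Q}(q,t)$ so that induction can proceed. This is where the calibrated-representation framework of Gonz\'alez--Gorsky--Simental plays the essential role, since it provides a basis of $V_k$ of simultaneous $z_i$-eigenvectors with distinct spectra, turning the triangularity into an essentially diagonal statement about eigenvalues.
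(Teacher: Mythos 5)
First, a point of comparison: the paper does not actually prove this lemma. It is imported verbatim from \cite{gonzález2023calibrated}, so there is no internal proof to measure your argument against; I can only assess your proposal on its own terms, and it has a genuine gap.

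The architecture (induction on the vertex $k$, loops at $0$ for the base case, $d_+$ plus vertex-$k$ operators for the step) is sensible, but the engine of your inductive step fails. You propose to generate the powers of $y_k$ by ``successive application of $z_k$,'' asserting that $z_k$ ``raises the $y_k$-degree by one modulo terms of lower $y_k$-degree.'' The $z_i$ are the Cherednik-type generators of $\mathbb{B}_{q,t}$: under the Ion--Wu isomorphism $L_{\bullet} \rightarrow V_{\bullet}$ used in this paper they correspond to the limit Cherednik operators $\y_i$, which are homogeneous of degree zero (they are built from $T_j$, $\rho$, and $\pi_n$, all degree-preserving, and are diagonalized by the homogeneous basis $\widetilde{E}_{(\mu|\lambda)}$ with scalar eigenvalues). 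Consequently no word in the $z_i$'s and $T_i$'s can increase the $y$-degree at a fixed vertex, and your induction on total $y$-degree never produces anything new. In $V_{\bullet}$ the degree-raising comes only from loops passing through higher vertices, i.e.\ compositions of the shape $d_-^{\,j} A\, d_+^{\,j}$ --- exactly the mechanism you invoke at vertex $0$ to realize multiplication by symmetric functions, but then abandon for $k \geq 1$. A correct argument must either exhibit multiplication by the $y_i$ (or by enough symmetric functions in the $y$'s and in $X$) as such loops based at vertex $k$, or work directly with the eigenbasis and explicit raising operators. Note also that even your base case leans on the nontrivial imported fact that the $p_r[X]^{\bullet}$ arise as loops at $0$; this is fine as a citation but should be flagged as such.

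Your proposed fallback --- that the calibrated basis consists of joint $z_i$-eigenvectors ``with distinct spectra,'' so that triangularity reduces to a diagonal statement --- contradicts a point this paper repeatedly stresses: the joint $\y$- (equivalently $z$-) weight spaces in each $V_k$ are \emph{infinite-dimensional}. Simplicity of the joint spectrum is only restored after adjoining the $\Delta$-operators, i.e.\ in $\mathbb{B}_{q,t}^{\text{ext}}$, and those cannot be used when proving cyclicity of $V_{\bullet}$ over $\mathbb{B}_{q,t}$ itself (indeed, the generation lemma is an input to \emph{defining} the extended action on $V_{\bullet}$, so invoking the extended structure here would be circular).
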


From this Gonz\'{a}lez-Gorsky-Simental find that the $\mathbb{B}_{q,t}^{\text{ext}}$ module structure on $V_{\bullet}$ is uniquely determined by the commutation relations for the $\Delta_{p_m}$ operators in the following way:

\begin{cor}\cite{gonzález2023calibrated}
    We can \textit{uniquely} define the operators $\Delta_{p_m}$ on the polynomial representation $V_{\bullet}$ by setting $\Delta_{p_m}(1) = 0$ and extending via Lemma \ref{B qt poly rep generation} using the relations in Definition \ref{ext B qt relations}.
\end{cor}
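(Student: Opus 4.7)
The plan is to prove the corollary in two steps, handling uniqueness and existence separately, with the bulk of the work for existence already established in the preceding sections.

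First I would dispatch uniqueness. By Lemma \ref{B qt poly rep generation}, any $v \in V_{\bullet}$ can be written as $b \cdot 1$ for some $b \in \mathbb{B}_{q,t}$. For any operator $\Delta_{p_m}$ satisfying the relations of Definition \ref{ext B qt relations} and $\Delta_{p_m}(1) = 0$, we have $\Delta_{p_m}(b \cdot 1) = [\Delta_{p_m}, b](1)$. Writing $b$ as a word in the generators $T_i, z_i, d_{+}, d_{-}$ of $\mathbb{B}_{q,t}$, the commutator $[\Delta_{p_m}, b]$ can be computed by iteratively applying the given relations: commuting past $T_i, z_i, d_{-}$ is free, while each crossing of $d_{+}$ produces a $z_1^m d_{+}$ term. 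By induction on word length this rewrites $[\Delta_{p_m}, b]$ as a specific element $b' \in \mathbb{B}_{q,t}$ depending only on $b$ and the relations, and hence $\Delta_{p_m}(v) = b' \cdot 1$ is forced.

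Next I would establish existence by transporting the $\Delta_{p_m}$ operators constructed in this paper from $L_{\bullet}$ to $V_{\bullet}$ via the Ion-Wu module isomorphism displayed immediately above the statement. The operators $\Delta_{p_m} = \frac{1}{q^m-1}\bigl(t^{-m} p_m[\Delta] - \frac{1}{1-t^m}\bigr)$ on $L_{\bullet}$ satisfy $[\Delta_{p_m},\Delta_{p_\ell}] = 0$ and $[\Delta_{p_m}, T_i] = 0$ by Theorem \ref{convergence theorem}; commute with $z_i$ (the limit Cherednik operators $\y_i$) by the same theorem; commute with $d_{-}$ (which on $L_{\bullet}$ is built from $\epsilon_k$) by Remark \ref{deltas commute with lowering}; and satisfy $[\Delta_{p_m}, \widetilde{\pi}] = \y_1^m \widetilde{\pi}$ by Corollary \ref{commutation relation cor}, which after the normalization becomes the desired $[\Delta_{p_m}, d_{+}] = z_1^m d_{+}$. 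Finally, the computation $\Delta_{p_m}(1) = 0$ follows from evaluating $p_m[\Delta]$ on $\widetilde{E}_{(\emptyset|\emptyset)} = 1$, since $\kappa_{\emptyset}(q,t) = \frac{t}{1-t}$ gives $p_m[\Delta](1) = \frac{t^m}{1-t^m}$, which cancels exactly the constant term in the definition of $\Delta_{p_m}$.

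The main conceptual obstacle is verifying that the action transported from $L_{\bullet}$ really does coincide with the one prescribed abstractly by the GGS corollary — but once uniqueness is in hand, the two actions must agree, since both satisfy the same relations, the same value at $1$, and both extend the $\mathbb{B}_{q,t}$ action already identified via the Ion-Wu isomorphism. So the real technical content lies entirely in checking the four bullet points of Definition \ref{ext B qt relations} hold for the operators constructed in this paper, each of which reduces to a previously proved result; the corollary then follows formally from these checks combined with Lemma \ref{B qt poly rep generation}.
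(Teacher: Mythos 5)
Your proposal is correct in substance, but it takes a different (and logically reorganized) route from the paper: the paper offers no proof of this corollary at all --- it is quoted directly from \cite{gonzález2023calibrated}, where both existence and uniqueness are established internally to the $\mathbb{B}_{q,t}^{\text{ext}}$ formalism, independently of the Ion--Wu picture. Your uniqueness argument (write $v = b\cdot 1$ via Lemma \ref{B qt poly rep generation}, push $\Delta_{p_m}$ through the word $b$ using the relations of Definition \ref{ext B qt relations}, and observe that each crossing of $d_+$ deposits a term $z_1^m d_+$ lying in $\mathbb{B}_{q,t}$, so $\Delta_{p_m}(b\cdot 1) = c\cdot 1$ with $c \in \mathbb{B}_{q,t}$ determined by $b$) is exactly the standard argument and is what the cited corollary rests on; the only care needed is that $\mathbb{B}_{q,t}$ is a non-unital quiver path algebra, so the rewriting must be done vertex-by-vertex, but this causes no difficulty. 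Where you genuinely diverge is in existence: you import the operators $\Delta_{p_m} = \frac{1}{q^m-1}\bigl(t^{-m}p_m[\Delta] - \frac{1}{1-t^m}\bigr)$ from $L_{\bullet}$ through the Ion--Wu isomorphism and verify the four relations via Theorem \ref{convergence theorem}, Remark \ref{deltas commute with lowering}, and Corollary \ref{commutation relation cor}, together with the correct computation $p_m[\Delta](1) = \frac{t^m}{1-t^m}$ from $\kappa_{\emptyset}(q,t) = \frac{t}{1-t}$. This is sound, but note that it front-loads precisely the content of the paper's subsequent Corollary \ref{ext B qt cor}: in the paper's logical order, González--Gorsky--Simental's existence is taken as given, and the transported operators are then shown to \emph{coincide} with theirs by uniqueness. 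Your version proves existence \emph{by} the transport and renders Corollary \ref{ext B qt cor} an immediate byproduct --- a perfectly valid alternative organization, though it means your ``proof'' is no longer a proof of the GGS statement as a self-contained result about $V_{\bullet}$, but rather a proof that depends on the analytic machinery of this paper (convergence of $\Psi_{p_m}^{(n)}$, etc.), which GGS's own argument does not require.
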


Putting together all of the work in this section thus far we get the following result.

\begin{cor}\label{ext B qt cor}
    The Ion-Wu map $L_{\bullet} \rightarrow V_{\bullet}$ is an isomorphism of $\mathbb{B}_{q,t}^{\text{ext}}$ modules.
\end{cor}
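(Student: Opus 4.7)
The plan is to assemble the three main ingredients already proven in this section: the commutation relations involving $F[\Delta]$, the vanishing $\Delta_{p_m}(1)=0$, and the uniqueness corollary of Gonz\'alez--Gorsky--Simental quoted just above. Since the Ion--Wu map $L_\bullet \to V_\bullet$ is already known to be a $\mathbb{B}_{q,t}$-isomorphism, I only need to check that it also intertwines the additional generators $\Delta_{p_m}$, and this will follow by transport of structure plus uniqueness.

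First I would verify that $L_\bullet$, with its existing $\mathbb{B}_{q,t}$-action together with the normalized operators $\Delta_{p_m}$, actually satisfies the defining relations of $\mathbb{B}_{q,t}^{\text{ext}}$ from Definition \ref{ext B qt relations}. The relations $[\Delta_{p_m},\Delta_{p_\ell}] = [\Delta_{p_m},T_i] = [\Delta_{p_m},z_i] = 0$ are immediate from Theorem \ref{convergence theorem} after matching $z_i$ with an appropriately rescaled $\y_i$; the relation $[\Delta_{p_m},d_-] = 0$ follows from $[F[\Delta],\epsilon_k]=0$, which is Remark \ref{deltas commute with lowering}; and the relation $[\Delta_{p_m},d_+] = z_1^m d_+$ is a rescaled form of Corollary \ref{commutation relation cor}, the prefactor $1/(q^m-1)$ in the definition of $\Delta_{p_m}$ being chosen precisely so that the scalar $(q^{-m}-1)$ produced by the corollary collapses into the clean form demanded by Definition \ref{ext B qt relations}. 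I would then record the base-point identity $\Delta_{p_m}(1)=0$ in $L_0 = \Lambda$: since $1 = \widetilde{E}_{(\emptyset|\emptyset)}$, Theorem \ref{convergence theorem} gives $p_m[\Delta](1) = p_m[\kappa_\emptyset(q,t)] = p_m[t/(1-t)] = t^m/(1-t^m)$, and a direct substitution shows the bracketed expression $t^{-m}p_m[\Delta](1) - 1/(1-t^m)$ vanishes identically.

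With both facts in hand, the corollary follows formally. Pushing the $\mathbb{B}_{q,t}^{\text{ext}}$-action on $L_\bullet$ forward along the Ion--Wu map yields a $\mathbb{B}_{q,t}^{\text{ext}}$-structure on $V_\bullet$ which extends the already known $\mathbb{B}_{q,t}$-action and which satisfies $\Delta_{p_m}(1) = 0$, because the Ion--Wu map sends $1 \in L_0$ to $1 \in V_0$. By Lemma \ref{B qt poly rep generation} together with the uniqueness corollary of Gonz\'alez--Gorsky--Simental, this transported structure must coincide with their original $\mathbb{B}_{q,t}^{\text{ext}}$-action on $V_\bullet$, so the Ion--Wu map intertwines all of $\mathbb{B}_{q,t}^{\text{ext}}$. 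The main obstacle is purely bookkeeping: pinning down the dictionary between the generators $z_i, d_\pm$ in \cite{gonzález2023calibrated} and the operators $\y_i$, $\widetilde{\pi}$ and the $\epsilon_k$-based lowering maps used here, and checking that the scaling constant in the definition of $\Delta_{p_m}$ has been engineered exactly to make the relations match on the nose; once the dictionary is fixed, the verification is immediate from the results already established.
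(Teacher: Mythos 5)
Your proposal is correct and follows essentially the same route as the paper: the paper's proof is precisely the assembly of the verified $\mathbb{B}_{q,t}^{\text{ext}}$ relations on $L_{\bullet}$ (via Theorem \ref{convergence theorem}, Remark \ref{deltas commute with lowering}, and Corollary \ref{commutation relation cor}), the normalization giving $\Delta_{p_m}(1)=0$, and the Gonz\'{a}lez--Gorsky--Simental uniqueness statement applied through the known $\mathbb{B}_{q,t}$-isomorphism $L_{\bullet}\rightarrow V_{\bullet}$. Your explicit check that $t^{-m}p_m[\kappa_{\emptyset}(q,t)] = 1/(1-t^m)$ is a correct and welcome detail the paper leaves implicit.
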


\begin{remark}

In particular, the $\Delta_{p_m}$ operators on $V_{\bullet}$ of Gonz\'{a}lez-Gorsky-Simental correspond through the Ion-Wu map to limits of sums of symmetric polynomials in the Cherednik operators as in Theorem \ref{convergence theorem}. It would interesting to know if the formula from Proposition \ref{convergence for elementary sym}
$$e_r[\Delta]|_{x_1\cdots x_k\sP(k)^{+}} = \sum_{s = 0}^{r} \left[ \prod_{i=1}^{s}\left( \frac{t^i}{1-t^i}\right) \right] e_{r-s}(\y_1, \ldots, \y_k) \epsilon_k (T_k\cdots T_1 \pi)^s$$ corresponds to something geometrically meaningful on the Gonz\'{a}lez-Gorsky-Simental $\mathbb{B}_{q,t}^{\text{ext}}$ module $V_{\bullet}.$

\end{remark}

\printbibliography

\end{document}